\numberwithin{equation}{section}
\newtheorem{Theorem}{Theorem}[section]
\newtheorem{Corollary}[Theorem]{Corollary}
{ \theoremstyle{definition}
\newtheorem{Remark}[Theorem]{Remark} }
\begin{document}

\newcommand{\arXivNumber}{1707.05216}

\renewcommand{\PaperNumber}{035}

\FirstPageHeading

\ShortArticleName{On Basic Fourier--Bessel Expansions}

\ArticleName{On Basic Fourier--Bessel Expansions}

\Author{Jos\'e Luis CARDOSO}

\AuthorNameForHeading{J.L.~Cardoso}

\Address{Mathematics Department, University of Tr\'as-os-Montes e Alto Douro (UTAD),\\ Vila Real, Portugal}
\Email{\href{mailto:jluis@utad.pt}{jluis@utad.pt}}

\ArticleDates{Received September 27, 2017, in final form April 11, 2018; Published online April 17, 2018}

\Abstract{When dealing with Fourier expansions using the third Jackson (also known as Hahn--Exton) $q$-Bessel function, the corresponding positive zeros $j_{k\nu}$ and the ``shifted'' zeros,~$qj_{k\nu}$, among others, play an essential role. Mixing classical analysis with $q$-analysis we were able to prove asymptotic relations between those zeros and the ``shifted'' ones, as well as the asymptotic behavior of the third Jackson $q$-Bessel
function when computed on the ``shifted'' zeros. A version of a~$q$-analogue of the Riemann--Lebesgue theorem within the scope of basic Fourier--Bessel expansions is also exhibited.}

\Keywords{third Jackson $q$-Bessel function; Hahn--Exton $q$-Bessel function; basic Fourier--Bessel expansions; basic hypergeometric function; asymptotic behavior; Riemann--Lebesgue theorem}

\Classification{42C10; 33D45; 33D15}

\section{Introduction}

When dealing with basic Fourier--Bessel expansions, due to convergence issues, it is crucial to know the asymptotic behavior of the third Jackson $q$-Bessel function when computed in its own shifted zeros. For this purpose, in the sequel of Rahman, as pointed out by Koelink and Swarttouw \cite[p.~696]{KS}, ``the intermingling of (ordinary) analysis and $q$-analysis may be fruitful''.

In the literature, the function $J_{\nu}^{(3)}(z;q)\equiv J_{\nu}(z;q)$, where $\nu$ and $q$ are parameters satisfying $\nu>-1$ and $0<q<1$, is usually identified by the third Jackson $q$-Bessel function or by the Hahn--Exton $q$-Bessel function:
\begin{gather}\label{def}
J_{\nu }^{(3)}(z;q)\equiv J_{\nu }(z;q):=z^{\nu }\frac{\big(q^{\nu +1};q\big)_{\infty }}{(q;q)_{\infty }} \sum\limits_{k=0}^{+\infty}(-1)^{k}\frac{q^{\frac{k(k+1)}{2}}}{\big(q^{\nu +1};q\big)_{k}(q;q)_{k}}z^{2k} .
\end{gather}
Using the basic hypergeometric representation \cite[p.~4]{GR} for ${_r}\phi_s$, it is very well known that (\ref{def}) can be written as
\begin{gather}\label{def2}
J_{\nu }(z;q):=z^{\nu }\frac{\big(q^{\nu +1};q\big)_{\infty }}{(q;q)_{\infty }} {_1}\phi_1\big(0;q^{\nu+1};q,qz^2\big) .
\end{gather}

In \cite{A2} it was shown, under some restrictions, that these functions are the only ones that satisfy a $q$-analogue of the Hardy result~\cite{Ha} about functions \emph{orthogonal with respect to their own zeros}.

We have the following limit
\begin{gather*}\lim_{q\rightarrow1}J_{\nu}\left(\frac{1-q}{2}x;q\right)=J_{\nu}(x) ,\end{gather*}
where $J_{\nu}(x)$ is the (classical) Bessel function of the first kind~\cite{Watson} of order~$\nu$,{\samepage
\begin{gather*} J_{\nu}(x):= \left(\frac{x}{2}\right)^{\nu} \sum_{k=0}^{+\infty}\frac{(-1)^k\big(\frac{x}{2}\big)^{2k}}{k!\Gamma(\nu+k+1)} ,\end{gather*}
which shows that $J_{\nu }(z;q)$ is a $q$-analogue of the Bessel function $J_{\nu}(z)$.}

Exton originally in \cite{E1978,E} and later Koelink and Swarttouw in \cite[Proposition~3.5, p.~696]{KS}, proved that the function~(\ref{def}) satisfies an orthogonality of the form
\begin{gather}\label{ort}
\int_{0}^{1}xJ_{\nu }\big(qj_{n\nu}x;q^{2}\big)J_{\nu}\big(qj_{m\nu}x;q^{2}\big){\rm d}_{q}x=\eta_{n,\nu}\delta _{n,m},\\
\eta_{n,\nu}\equiv\eta_{n,\nu}(q)= \int_{0}^{1}tJ_{\nu}^2\big(qj_{k\nu}t;q^{2}\big){\rm d}_{q}t ,\nonumber
\end{gather}
where $j_{n\nu}\equiv j_{n\nu}\big(q^2\big)$, with $j_{1\nu }(q^{2})<j_{2\nu }(q^{2})<\cdots$, represent the (ordered) positive zeros of $J_{\nu}\big(z;q^2\big)$ and the $q$-integral in the interval $[0,1]$ is the one introduced by Thomae (in~1869 and in~1870)
\begin{gather}\label{qintegral-0a}
\int_{0}^{1}f(t){\rm d}_{q}t:= (1-q ) \sum_{k=0}^{+\infty }f\big(q^{k}\big)q^{k} ,
\end{gather}
which was later generalized by Jackson to any interval $[a,b]$ (see \cite[p.~19]{GR}).

Using the definition~(\ref{qintegral-0a}) we may consider an inner product by setting
\begin{gather}\label{inner-product}
\langle f,g\rangle :=\int_{0}^{1}f(t)\overline{g(t)}{\rm d}_{q}t .
\end{gather}
The resulting Hilbert space is commonly denoted by $L_{q}^{2}[0,1]$, being this space a separable Hilbert space \cite{Ann,CP}, consisting of the (quotient) set of functions $f$ such that
\begin{gather*} \int_{0}^{1}|f(t)|^2{\rm d}_{q}t<+\infty .\end{gather*}

Abreu and Bustoz showed in \cite{AB} that the sequence $\{u_{k}\}_{k}$, where
\begin{gather*}
u_{k}(x)=\frac{x^{\frac{1}{2}}J_{\nu }\big(j_{k\nu }qx;q^{2}\big)}{\big\Vert x^{\frac{1}{2}}J_{\nu }\big(j_{k\nu }qx;q^{2}\big)\big\Vert } ,
\end{gather*}
define a \emph{complete} system in $L_{q}^{2}[0,1]$, meaning that, whenever a function $f$ is in $L_{q}^{2}[0,1]$, if
\begin{gather*} \int_{0}^{1}f(x)u_{k}(x){\rm d}_{q}x=0 , \qquad k=1,2,3,\ldots ,\end{gather*}
then $f \big(q^{k}\big)=0$, $k=0,1,2,\ldots$.

Regarding criteria for completeness in $L_{q}[0,1]$ see \cite{A}, with an interesting application to a~$q$-version of the uncertainty principle via the $q$-Hankel transform and a~completeness property of the third Jackson $q$-Bessel function.

Basic Fourier expansions were studied in \cite{BC,BS}, with respect to quadratic grids and to linear grids, respectively. For an overview over basic Fourier expansions see~\cite{S2003}. In~\cite{JLC,JLC2} were presented results regarding convergence issues concerning basic Fourier expansions involving the basic sine and cosine functions considered by Suslov~\cite{S}, which are equivalent to the ones introduced by Exton~\cite{E}. With properties connected to this or related functions we refer to~\cite{St}. For new definitions of $q$-exponential and $q$-trigonometric functions see~\cite{C}. Since we are using and proving some asymptotic results, we highlight~\cite{D} on this subject, where a~complete asymptotic expansion for the $q$-Pochhammer symbol (or, the infinite $q$-shifted factorial $(z;q)_{\infty}$) was exhibited. We also point out~\cite{SS2016} with an appendix where, among others, asymptotic results for the theta function, for the ${_1}\phi_1(0;\omega;q,z)$ function and for its derivative were presented. This last two ones, with an useful separation of the terms that \emph{increase} from the terms that \emph{decrease}, were crucial to establish our results of the third Jackson $q$-Bessel function.

Other publications also show estimates or inequalities involving the third Jackson $q$-Bessel function: for instance, equation~(3.2.14) of \cite{S-PhD} when $\nu=n$, equation~(2.4) of Proposition~2.1 from~\cite{KooS}, Lemma~1 from~\cite{BBEB} and~\cite{ESF} for the particular case of third Jackson $q$-Bessel function of order zero.

In our judgement the main results of this work are Theorems~\ref{asymptotic-derivativeJ3-at-zeros},~\ref{shiftedzero},
\ref{AsymptoticJ3-at-shifted-zeros} and~\ref{Riemann--Lebesgue-2}. We also emphasize Theorem~\ref{signderivative-J3-2} since it was decisive for the proof of Theorem~\ref{shiftedzero} and mostly because of its important Corollaries~\ref{monotonyJ3-1} and \ref{monotonyJ3-2}. We believe that all the results stated in Section~\ref{asymptotic-properties-J3} and Theorem \ref{Riemann--Lebesgue-2} of Section~\ref{Riemann--Lebesgue-theorem} are original.

To know the asymptotic behaviors given by Theorems \ref{asymptotic-derivativeJ3-at-zeros} and \ref{AsymptoticJ3-at-shifted-zeros}, are decisive to study convergence properties of the basic Fourier--Bessel expansions. This is an important issue and, in our opinion, it is the most relevant contribution of the present work.

Many questions concerning basic Fourier or basic Fourier--Bessel expansions can be raised: analogues of Dirichlet's kernel, Riemann--Lebesgue theorem, Dini's condition or summability (Fej\'er's theorem) and many other topics are open problems since some of the nice properties used in the corresponding proofs are no longer valid in the context of basic expansions. Regarding this, we approach these difficulties and push a little further towards a $q$-analogue of the Riemann--Lebesgue theorem.

The paper is organized as follows: in Section~\ref{def-pre} we collect the main definitions and preliminary results that were taken from other publications; in Section~\ref{asymptotic-properties-J3} we present some asymptotic behavior of the third Jackson $q$-Bessel function and of its derivative when computed at certain points. We also study the asymptotic behavior of the zeros $j_{m\nu}$ and their relations with the ``shifted'' zeros $qj_{m\nu}$ or $\frac{j_{m\nu}}{q}$, for large values of $m=1,2,3,\ldots$ and explore its consequences to obtain other results; we finish with Section~\ref{Riemann--Lebesgue-theorem} where an analogue of the Riemann--Lebesgue theorem concerning basic Fourier--Bessel expansion is proved.

\section{Definitions and preliminary results}\label{def-pre}

Fixing $0<q<1$ and following the standard notations of \cite{AAR,GR}, the $q$-shifted factorial for a~finite positive integer~$n$ is defined by
\begin{gather*}
(a;q)_{n}= ( 1-q ) ( 1-aq )\cdots\big(1-aq^{n-1}\big)
\end{gather*}
and the zero and infinite cases as
\begin{gather*}
(a;q)_{0}=1 ,\qquad (a;q)_{\infty }=\lim\limits_{n\rightarrow \infty }(a;q)_{n} .
\end{gather*}
The third Jackson $q$-Bessel function has a countable infinite number of real and simple zeros, as it was shown in~\cite{KS}. In \cite[Theorem~2.3]{ABC} it was proved the following theorem:

\begin{Theorem}\label{TheoremA} For every $q\in {}]0,1[$, $k_{0}\in N$ exists such that, if $k\geq k_{0}$ then
\begin{gather*}
j_{k\nu }=q^{-k+\epsilon _{k}^{(\nu )}(q^{2})} ,
\end{gather*}
with
\begin{gather*}
0<\epsilon _{k}^{(\nu )}\big(q^{2}\big)<\alpha _{k}^{(\nu )}\big(q^{2}\big) ,
\qquad \text{where} \quad
\alpha _{k}^{(\nu )}\big(q^{2}\big)=\frac{\log {\big( 1-q^{2(k+\nu)}/\big(1-q^{2k}\big)\big) }}{2\log q} .
\end{gather*}
\end{Theorem}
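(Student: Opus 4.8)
The plan is to strip the nonvanishing prefactor and work with the entire function
\[
S(z)=\sum_{n=0}^{\infty}(-1)^{n}c_{n}z^{2n},\qquad c_{n}=\frac{q^{n(n+1)}}{\big(q^{2(\nu+1)};q^{2}\big)_{n}\big(q^{2};q^{2}\big)_{n}}>0,
\]
so that, by (\ref{def})--(\ref{def2}) with $q$ replaced by $q^{2}$, we have $J_{\nu}(z;q^{2})=z^{\nu}\frac{(q^{2(\nu+1)};q^{2})_{\infty}}{(q^{2};q^{2})_{\infty}}S(z)$ and hence the positive zeros of $S$ are exactly the numbers $j_{k\nu}$. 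Writing $j_{k\nu}=q^{-k+\epsilon_{k}}$ and using $0<q<1$, the two inequalities to be established read $j_{k\nu}<q^{-k}$ (this is $\epsilon_{k}>0$) and $j_{k\nu}>q^{-k}\sqrt{1-q^{2(k+\nu)}/(1-q^{2k})}$ (this is $\epsilon_{k}<\alpha_{k}^{(\nu)}$, since $q^{\alpha_{k}^{(\nu)}}=\sqrt{1-q^{2(k+\nu)}/(1-q^{2k})}$). Thus it suffices to trap the $k$-th zero between the grid point $q^{-k}$ and the slightly smaller point $q^{-k}\sqrt{1-q^{2(k+\nu)}/(1-q^{2k})}$.

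First I would read off from $c_{n}\big(1-q^{2(\nu+n)}\big)\big(1-q^{2n}\big)=q^{2n}c_{n-1}$ the $q$-difference equation
\[
S(z)=\big(1+q^{2\nu}-q^{2}z^{2}\big)S(qz)-q^{2\nu}S\big(q^{2}z\big),
\]
verified coefficient by coefficient. Evaluating it along the grid $z=q^{-k}$ and setting $a_{k}:=S\big(q^{-k}\big)$ gives the three-term recurrence
\[
a_{k}=\big(1+q^{2\nu}-q^{2-2k}\big)a_{k-1}-q^{2\nu}a_{k-2}.
\]
Because $q^{2-2k}\to+\infty$, for large $k$ the dominant coefficient is $-q^{2-2k}$, and a straightforward induction shows $|a_{k}|\sim q^{2-2k}|a_{k-1}|$ with $\mathrm{sign}(a_{k})=-\mathrm{sign}(a_{k-1})$ (the term $-q^{2\nu}a_{k-2}$ is negligible against $q^{2-2k}a_{k-1}$). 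Hence, for $k\ge k_{0}$, the values $S(q^{-k})$ are nonzero and alternate in sign, so $S$ has a sign change, and therefore a zero, strictly inside each interval $\big(q^{-(k-1)},q^{-k}\big)$; since $S(q^{-k})\neq0$ the zero lies strictly below $q^{-k}$, which already yields $\epsilon_{k}>0$.

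To localize the zero sharply and obtain the lower bound, I would normalize $u=q^{2k}z^{2}$ (so the zero sits near $u=1$) and factor out the largest coefficient, writing $S=(-1)^{k}q^{-k(k-1)}u^{k}P_{k}(u)$ with $P_{k}(u)=\sum_{r\ge-k}\frac{(-1)^{r}q^{r(r+1)}}{(q^{2(\nu+1)};q^{2})_{k+r}(q^{2};q^{2})_{k+r}}u^{r}$. As $k\to\infty$ this converges to a Jacobi-triple-product theta function with a simple zero exactly at $u=1$, confirming $\epsilon_{k}\to0$ and that there is a single zero near each $q^{-k}$. Set the threshold $u_{-}:=1-q^{2(k+\nu)}/(1-q^{2k})$, so $q^{-k}\sqrt{u_{-}}$ is the asserted lower endpoint. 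The quantitative lower bound then amounts to showing $S\neq0$ for $q^{-(k-1)}<z\le q^{-k}\sqrt{u_{-}}$, i.e.\ that the alternating series keeps the sign $-\mathrm{sign}(a_{k})$ throughout that sub-interval. I would isolate the two central terms $n=k-1,k$, whose balance point is $u_{0}=(1-q^{2(\nu+k)})(1-q^{2k})<u_{-}$, and estimate the remaining tail geometrically; the value $u_{-}$ is precisely the point up to which the tail bound still forces the sign, and this is where $\alpha_{k}^{(\nu)}$ originates.

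The main obstacle is exactly this last step. The leading theta approximation places the zero at $u=1$ while the two-term balance places it at $u_{0}$, and these two crude pictures \emph{bracket} the target interval $(u_{-},1)$ from both sides, so neither alone suffices. One must control the full tail $\sum_{|n-k|\ge2}$ uniformly in $k$ finely enough both to pin the sign on $\big(q^{-(k-1)},q^{-k}\sqrt{u_{-}}\big]$ and to rule out any extra zero there. Combined with the simplicity of the zeros (known from \cite{KS}) and a counting of the sign changes of $a_{k}$ against the ordered zero sequence (which, after adjusting $k_{0}$, fixes the absolute index), this shows the trapped zero is $j_{k\nu}$ and completes the two-sided estimate $0<\epsilon_{k}^{(\nu)}(q^{2})<\alpha_{k}^{(\nu)}(q^{2})$ for all $k\ge k_{0}$.
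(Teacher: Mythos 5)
First, note that the paper does not actually prove this statement: Theorem~\ref{TheoremA} is quoted verbatim from \cite[Theorem~2.3]{ABC}, so there is no in-paper proof to compare against; what follows is an assessment of your argument on its own terms. Your setup is sound: the reduction to the series $S(z)$, the equivalence of the two inequalities with $q^{-k}q^{\alpha_k^{(\nu)}}<j_{k\nu}<q^{-k}$ (using $q^{\alpha_k^{(\nu)}}=\sqrt{1-q^{2(k+\nu)}/(1-q^{2k})}$), the $q$-difference equation $S(z)=(1+q^{2\nu}-q^{2}z^{2})S(qz)-q^{2\nu}S(q^{2}z)$ (which does follow from $c_n(1-q^{2(\nu+n)})(1-q^{2n})=q^{2n}c_{n-1}$), and the resulting recurrence for $a_k=S(q^{-k})$ are all correct. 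Even here, though, the ``straightforward induction'' is not quite straightforward: to launch it you must exclude the possibility that $|a_{k}|$ decays so fast that $q^{2-2k}|a_{k-1}|$ never dominates $|a_{k-2}|$. This is fixable (two consecutive vanishing $a_k$ would propagate through the recurrence in both directions and force $S(q^{j})=0$ for all $j\ge 0$, contradicting $S(0)=1$; one then quantifies), but it needs to be said.

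The genuine gap is the lower bound $\epsilon_k^{(\nu)}<\alpha_k^{(\nu)}$, which you yourself flag as ``the main obstacle'' and then do not carry out --- and the mechanism you propose for it would fail. Your plan is to show that $S$ keeps the sign $(-1)^{k-1}$ up to $u=u_-:=1-q^{2(k+\nu)}/(1-q^{2k})$ by isolating the two central terms $n=k-1,k$ and bounding the tail geometrically. But, as you note, the balance point of those two terms is $u_0=(1-q^{2(\nu+k)})(1-q^{2k})<u_-$, so at $u=u_-$ the central pair already has net sign $(-1)^{k}$, the \emph{same} as at $u=1$; no upper bound on the tail can then ``force'' the required opposite sign $(-1)^{k-1}$ at $u_-$. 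What actually happens is that every pair of terms $(n,\,2k-1-n)$ contributes to $P_k(u)$ at the \emph{same} relative order $q^{2k}$ as the net of the central pair (the infinite Pochhammer corrections $1/(\,\cdot\,;q^2)_{k+r}-1/(\,\cdot\,;q^2)_{\infty}$ are all $O(q^{2(k+r)})$ and the Gaussian weights $q^{r(r+1)}q^{2r}$ resum to a theta value that itself vanishes), so the sign of $S$ at $u_-$ is decided by a full second-order cancellation across all pairs, not by a dominant pair plus a small tail. That delicate, fully quantitative estimate is precisely the content of the theorem, and it is asserted rather than proved. Finally, identifying the trapped zero with the $k$-th zero $j_{k\nu}$ requires showing that, beyond some index, there are \emph{no} zeros other than the ones trapped near the points $q^{-k}$; ``a counting of the sign changes'' does not give this without an additional global argument (e.g., via the growth order of the entire function $S$ and Jensen's formula, or the zero count of \cite{KS}), which is also missing.
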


On this subject see \cite{AM} and \cite{SS2016}. The latter one improved the accuracy of the asymptotic expression for the zeros of the basic hypergeometric function ${_1}\phi_1\big(0;\omega;q,z\big)$, which appears in the definition~(\ref{def2}) of the Hahn--Exton $q$-Bessel function.
See also~\cite{H} where Hayman obtained an expression for the asymptotic behavior of the zeros of a certain class of entire functions, which perhaps may be extended to the third Jackson $q$-Bessel function.

Using Taylor expansion it can be shown that, as $k\rightarrow \infty$,
\begin{gather}\label{asymptoticbehavior}
\alpha _{k}^{(\nu )}\big(q^{2}\big)=\mathcal{O}\big(q^{2k}\big) .
\end{gather}

Formally, the $q$-Fourier Bessel series associated with a function $f$, by the orthogonal relation~(\ref{ort}), is defined by
\begin{gather*}
S_{q}^{\nu}[f](x):=\sum_{k=1}^{\infty }b_{k}^{(\nu)}\left(f\right)x^{\frac{1}{2}} J_{\nu}\big(qj_{k\nu }x;q^{2}\big) ,
\end{gather*}
with the coefficients $b_{k}^{(\nu)}$ given by
\begin{gather*}
b_{k}^{(\nu)}(f) =\frac{1}{\eta_{k,\nu}} \int_{0}^{1}t^{\frac{1}{2}}f(t)J_{\nu }\big(qj_{k\nu}t;q^{2}\big){\rm d}_{q}t ,
\end{gather*}
or, which we rather prefer,
\begin{gather}\label{q-FourierSeries}
S_{q}^{(\nu)}[f](x):=\sum_{k=1}^{+\infty }a_{k}^{(\nu)} (f ) J_{\nu}\big(qj_{k\nu }x;q^{2}\big) ,
\end{gather}
with the coefficients $a_{k}^{(\nu)}$ given by
\begin{gather}\label{q-FCoefficient}
a_{k}^{(\nu)}(f) =\frac{1}{\eta_{k,\nu}}\int_{0}^{1}tf(t)J_{\nu }\big(qj_{k\nu}t;q^{2}\big){\rm d}_{q}t
\end{gather}
and $\eta _{k,\nu}$ by
\begin{gather}
 \int_{0}^{1}tJ_{\nu}^2\big(qj_{k\nu}t;q^{2}\big){\rm d}_{q}t = \frac{q-1}{2}q^{\nu -1}J_{\nu +1}\big(qj_{k\nu
};q^{2}\big)J_{\nu}^{\prime}\big(j_{k\nu };q^{2}\big) \nonumber\\
 \hphantom{\int_{0}^{1}tJ_{\nu}^2\big(qj_{k\nu}t;q^{2}\big){\rm d}_{q}t }{} = \frac{q-1}{2j_{k\nu}}q^{\nu-2}J_{\nu}\big(qj_{k\nu};q^{2}\big)J_{\nu}^{\prime}\big(j_{k\nu};q^{2}\big) ,\label{eta}
\end{gather}
where the last equality can be derived from \cite[Proposition~3.5]{KS} or \cite[Proposition~5]{JLC3}.

The asymptotic behavior of the $q$-integral that appears in the Fourier coefficient~(\ref{q-FCoefficient}), as well as the asymptotic behavior (as $k\to\infty$) of the factors $J_{\nu}\big(qj_{k\nu};q^{2}\big)$ and $J_{\nu}^{\prime}\big(j_{k\nu};q^{2}\big)$ appearing in~$\eta _{k,\nu}$, are crucial for further developments related with convergence issues of the Fourier--Bessel expansion~(\ref{q-FourierSeries}). To study those behaviors will be the main purpose in this work.

Using the expansion obtained by Olde Daalhuis \cite[equation~(3.13), p.~905]{D} for the (infinite) $q$-shifted factorial (or $q$-Pochhammer symbol), \v{S}tampach and \v{S}\v{t}ov\'i\v{c}ek~\cite{SS2016} rewrote it in the following clearer form: considering the notation
\begin{gather}\label{A.1}
\tilde{q}=e^{\frac{4\pi^2}{\ln{(q)}}} ,\qquad\beta(z)=\frac{\pi\ln{(z)}}{\ln{(q)}}
\end{gather}
and
\begin{gather}\label{A.2}
A(z)=2q^{-\frac{1}{12}}\sqrt{z}e^{-\frac{\ln^2{(z)}}{2\ln{(q)}}+\frac{\pi^2}{3\ln{(q)}}}
\big|\big(\tilde{q}e^{2i\beta(z)};\tilde{q}\big)_{\infty}\big|^2
\end{gather}
then
\begin{gather*}
(z;q)_{\infty}=\frac{A(z)}{\big(\frac{q}{z};q\big)_{\infty}}\sin{(\beta(z))},
\end{gather*}
where $z>0$.
Using a symmetric relation \cite[equation~(2.3), p.~448]{KooS} satisfied by the basic function ${_1}\phi_1(0;\omega;q,z)$, Olde Daalhuis \cite[pp.~907--908]{D} describes briefly how to obtain an asymptotic expansion for the function
\begin{gather*}J_{\nu}\big(z;q^{2}\big)=z^{\nu}\frac{(q^{\nu+1};q)_{\infty}}{(q;q)_{\infty}}
{_1}\phi_1\big(0;q^{2(\nu+1)};q,qz^2\big).\end{gather*}
Later, \v{S}tampach and \v{S}\v{t}ov\'i\v{c}ek \cite{SS2016}, proved the following theorem which displays an asymptotic behavior for the function ${_1}\phi_1(0;\omega;q,z)$ and for its derivative, as $z\to\infty$.

\begin{Theorem} \label{TheoremB} Let $K(z):=\big[\frac 1 2-\frac{\ln{(z)}}{\ln{(q)}}\big]$ where $[x]$ represents the integer part of $x\in\mathbb{R}$. With the notation \eqref{A.1}, \eqref{A.2} and assuming that $0\leq \omega<1$, there exist functions $B(\omega,z)$ and $C(\omega,z)$ such that
\begin{gather*}{_1}\phi_1(0;\omega;q,z)=\frac{B(\omega,z)}{(\omega;q)_{\infty}}\\
\qquad{} \times \left(A(z)\sin{(\beta(z))}+(-1)^{K(z)+1}q^{\frac{(K(z)+1)K(z)}{2}}
\omega^{K(z)+1}\frac{\big(q^{K(z)+1}z;q\big)_{\infty}}{(q;q)_{\infty}}C(\omega,z)\right),
\end{gather*}
where, for $\omega$ fixed, $B(\omega,z)=1+O\big(z^{-1}\big)$, $C(\omega,z)=1+O\big(z^{-1}\big)$ as $z\to+\infty$.
\end{Theorem}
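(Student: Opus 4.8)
The plan is to build the expansion from two exact identities---a symmetry of ${_1}\phi_1$ and the modular transformation of $(z;q)_\infty$ displayed above---and then to extract the main term and the correction by splitting a single series at the index $K(z)$.

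First I would make the symmetric relation behind \cite[equation~(2.3)]{KooS} explicit. Since $(0;q)_k=1$ and $(\omega;q)_\infty/(\omega;q)_k=(q^k\omega;q)_\infty$, inserting Euler's expansion $(w;q)_\infty=\sum_{j\ge0}(-1)^jq^{\frac{j(j-1)}{2}}w^j/(q;q)_j$ and using $\frac{k(k-1)}{2}+\frac{j(j-1)}{2}+jk=\frac{(j+k)(j+k-1)}{2}$ yields the manifestly $(\omega,z)$-symmetric double series
\[
(\omega;q)_\infty\,{_1}\phi_1(0;\omega;q,z)=\sum_{j,k\ge0}\frac{(-1)^{j+k}q^{\frac{(j+k)(j+k-1)}{2}}}{(q;q)_j(q;q)_k}\,\omega^j z^k .
\]
Resumming over $k$ via Euler's formula gives the representation I would actually work from,
\[
(\omega;q)_\infty\,{_1}\phi_1(0;\omega;q,z)=\sum_{j\ge0}\frac{(-1)^jq^{\frac{j(j-1)}{2}}}{(q;q)_j}\,\omega^j\,(q^jz;q)_\infty ,
\]
which is entire in $z$ and exhibits each factor $(q^jz;q)_\infty$ ready for the modular formula.

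Next I would split this series at $j=K(z)$, realizing the ``separation of increasing from decreasing terms'' noted in \cite{SS2016}. To the head $0\le j\le K$ I would apply the displayed identity $(w;q)_\infty=A(w)\sin(\beta(w))/(q/w;q)_\infty$ with $w=q^jz$; since $\beta(q^jz)=\beta(z)+\pi j$ and a short computation gives $A(q^jz)=A(z)z^{-j}q^{-\frac{j(j-1)}{2}}$, the signs and powers of $q$ cancel the prefactor and the common factor $A(z)\sin(\beta(z))$ comes out:
\[
\sum_{j=0}^{K}\frac{(-1)^jq^{\frac{j(j-1)}{2}}}{(q;q)_j}\,\omega^j\,(q^jz;q)_\infty
=A(z)\sin(\beta(z))\sum_{j=0}^{K}\frac{(\omega/z)^j}{(q;q)_j\,(q^{1-j}/z;q)_\infty}.
\]
Because $K(z)\le\frac{1}{2}-\ln(z)/\ln(q)$, for $0\le j\le K$ the arguments $q^{1-j}/z$ stay $\le q^{1/2}$, so the factors $(q^{1-j}/z;q)_\infty$ are bounded below uniformly by $(q^{1/2};q)_\infty$; with the geometric bound $\sum_{j\ge1}(|\omega|/z)^j=O(z^{-1})$ the head equals $A(z)\sin(\beta(z))\,(1+O(z^{-1}))$, the $j=0$ term giving the main contribution. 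In the tail $j\ge K+1$ I would leave $(q^jz;q)_\infty$ untouched; here $q^{K+1}z$ lies in a fixed compact subinterval of $(0,1)$, the term $j=K+1$ is exactly $(-1)^{K+1}q^{\frac{(K+1)K}{2}}\omega^{K+1}(q^{K+1}z;q)_\infty/(q;q)_{K+1}$, and the ratio of successive tail terms is $O(q^{K})=O(z^{-1})$, so the tail is that single term times $1+O(z^{-1})$. Writing $(q;q)_\infty/(q;q)_{K+1}=(q^{K+2};q)_\infty=1+O(z^{-1})$ and collecting the surviving $1+O(z^{-1})$ factors into $B(\omega,z)$ and $C(\omega,z)$ then produces the asserted form.

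The hard part will be making all of this uniform in $z$, especially near the zeros $z=q^{-m}$ of $\sin(\beta(z))$, where the main term is small (indeed zero) while ${_1}\phi_1(0;\omega;q,z)$ is not; there it is the correction term, not the main term, that carries the value, so one must verify that the head and tail are genuinely additive pieces with coefficients $B,C=1+O(z^{-1})$ rather than one being a multiplicative error of the other. Concretely this reduces to sharp, $z$-uniform two-sided bounds on the $z$-dependent $q$-Pochhammer symbols $(q^{1-j}/z;q)_\infty$ and $(q^jz;q)_\infty$ across the whole ranges $0\le j\le K$ and $j\ge K+1$, together with the observation that the precise split index $K(z)=\big[\frac{1}{2}-\ln(z)/\ln(q)\big]$ is exactly what keeps both families of arguments away from the zeros $q^{-m}$ ($m\ge0$) of $(\,\cdot\,;q)_\infty$. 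The corresponding statement for the derivative follows by differentiating the same representation in $z$, the dominant balance between the $j=0$ and $j=K+1$ terms being preserved.
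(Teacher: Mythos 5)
The paper offers no proof of this statement: Theorem~\ref{TheoremB} is quoted verbatim from \v{S}tampach and \v{S}\v{t}ov\'i\v{c}ek \cite{SS2016}, whose argument --- described in the introduction as a ``separation of the terms that \emph{increase} from the terms that \emph{decrease}'' --- is precisely the splitting at $j=K(z)$ that you reconstruct, so your route coincides with the source the author relies on. Your derivation checks out: the symmetrized series $(\omega;q)_\infty\,{_1}\phi_1(0;\omega;q,z)=\sum_{j\ge0}(-1)^jq^{j(j-1)/2}\omega^j(q^jz;q)_\infty/(q;q)_j$, the identities $\beta(q^jz)=\beta(z)+\pi j$ and $A(q^jz)=A(z)z^{-j}q^{-j(j-1)/2}$, and the bounds $q^{1-j}/z\le q^{1/2}$ for $0\le j\le K$ and $q^{K+1}z\in[q^{3/2},q^{1/2})$ are all correct. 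The uniformity worry in your closing paragraph is already resolved by those same bounds: the head equals $A(z)\sin(\beta(z))$ times the exact sum $\sum_{j=0}^{K}(\omega/z)^j/\big[(q;q)_j(q^{1-j}/z;q)_\infty\big]$, which is $1+O(z^{-1})$ uniformly and independently of whether $\sin(\beta(z))$ vanishes, and the tail is its $j=K+1$ term times a factor $1+O(z^{-1})$ by the geometric ratio bound $O(q^K)=O(z^{-1})$; hence the two pieces are genuinely additive with multiplicative $1+O(z^{-1})$ corrections, and setting $B$ equal to the head's factor and $C$ equal to the quotient of the tail's factor by $B$ gives exactly the stated form.
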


\begin{Theorem} \label{TheoremC} Under the same assumptions of the previous theorem,
\begin{gather*}
 \frac{\partial {_1}\phi_1(0;\omega;q,z)}{\partial z} = \frac{A(z)}{(\omega;q)_{\infty}z} \left(\left(-\frac{\beta(z)}{\pi}+\frac{1}{2}\right)\sin{(\beta(z))}+ \frac{\pi}{\ln{(q)}}\cos{(\beta(z))} \right. \\
\left.\hphantom{\frac{\partial {_1}\phi_1(0;\omega;q,z)}{\partial z} =}{} +\frac{8\pi}{\ln{(q)}}\sum_{k=1}^{\infty} \frac{\tilde{q}^k}{\big|1-\tilde{q}^ke^{-2i\beta(z)}\big|^2} \sin^2{(\beta(z))}\cos{(\beta(z))}+O\left(\frac{\ln{(z)}}{z}\right)\right),
\end{gather*}
as $z\to+\infty$.
\end{Theorem}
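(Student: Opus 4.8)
The plan is to obtain the expansion by differentiating the representation of Theorem~\ref{TheoremB} with respect to $z$, differentiating the leading factor $\frac{A(z)}{(\omega;q)_\infty}\sin(\beta(z))$ \emph{exactly} and absorbing everything else into the error $O(\ln(z)/z)$. The three explicit terms in the statement are precisely $\frac{1}{z}$ times the bracket produced by $\frac{d}{dz}\big(A(z)\sin(\beta(z))\big)$, so the bulk of the work is this exact differentiation.

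First I would record $\beta'(z)=\frac{\pi}{z\ln(q)}$ and compute the logarithmic derivative of $A(z)$. Writing $A(z)=2q^{-1/12}e^{\pi^2/(3\ln(q))}\sqrt{z}\,e^{-\ln^2(z)/(2\ln(q))}\big|P(z)\big|^2$ with $P(z)=\big(\tilde{q}e^{2i\beta(z)};\tilde{q}\big)_\infty$, the algebraic and Gaussian-in-log factors contribute
\begin{gather*}
\frac{1}{2z}-\frac{\ln(z)}{z\ln(q)}=\frac{1}{z}\left(\frac12-\frac{\beta(z)}{\pi}\right),
\end{gather*}
where I used $\beta(z)/\pi=\ln(z)/\ln(q)$; once multiplied by $\sin(\beta(z))$ this accounts for the $\big(-\frac{\beta(z)}{\pi}+\frac12\big)\sin(\beta(z))$ term.

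Next I would differentiate the infinite-product factor. Since $0<\tilde{q}<1$, I can write $\ln\big|P(z)\big|^2=\sum_{k\ge1}\big[\ln\big(1-\tilde{q}^k e^{2i\beta(z)}\big)+\ln\big(1-\tilde{q}^k e^{-2i\beta(z)}\big)\big]$, differentiate termwise (legitimate by the geometric decay $\tilde{q}^k$, which gives uniform convergence on compacta), and combine each conjugate pair over the common denominator $\big|1-\tilde{q}^k e^{-2i\beta(z)}\big|^2=1-2\tilde{q}^k\cos(2\beta(z))+\tilde{q}^{2k}$. The conjugate numerators collapse to a multiple of $\tilde{q}^k\sin(2\beta(z))\beta'(z)$, yielding
\begin{gather*}
\frac{d}{dz}\ln\big|P(z)\big|^2=\frac{8\pi}{z\ln(q)}\sin(\beta(z))\cos(\beta(z))\sum_{k\ge1}\frac{\tilde{q}^k}{\big|1-\tilde{q}^k e^{-2i\beta(z)}\big|^2},
\end{gather*}
after $\sin(2\beta)=2\sin\beta\cos\beta$. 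Multiplying by $\sin(\beta(z))$ produces the $\sin^2(\beta(z))\cos(\beta(z))$ series, while the $A(z)\beta'(z)\cos(\beta(z))$ piece from the product rule supplies the $\frac{\pi}{\ln(q)}\cos(\beta(z))$ contribution. Assembling $\frac{d}{dz}\big(A(z)\sin(\beta(z))\big)=A'(z)\sin(\beta(z))+A(z)\beta'(z)\cos(\beta(z))$ and factoring out $\frac{A(z)}{z}$ then reproduces the three displayed terms exactly.

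The main obstacle is the rigorous control of the error, i.e.\ justifying that differentiation does not degrade the remainder beyond $O(\ln(z)/z)$, since differentiating an asymptotic relation is not automatic. I would therefore bound the remaining pieces of Theorem~\ref{TheoremB}: the factor $B(\omega,z)=1+O(z^{-1})$ together with its derivative, and the subordinate term carrying $(-1)^{K(z)+1}q^{(K(z)+1)K(z)/2}\omega^{K(z)+1}\frac{(q^{K(z)+1}z;q)_\infty}{(q;q)_\infty}C(\omega,z)$. Between the jumps of the integer part $K(z)$ this term is smooth and, because $0\le\omega<1$ forces $\omega^{K(z)+1}\to0$ as $K(z)\to+\infty$, it is genuinely subordinate to $A(z)\sin(\beta(z))$, and its derivative stays subordinate to the $A(z)/z$ prefactor; the $O(z^{-1})$ corrections of $B$ and $C$ differentiate into $O(\ln(z)/z)$. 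The cleanest way to make this watertight is to differentiate an \emph{exact} representation of ${_1}\phi_1(0;\omega;q,z)$ (the identity underlying Theorem~\ref{TheoremB}) before extracting asymptotics, replacing termwise differentiation of an expansion by differentiation of identities plus Cauchy-type estimates on analytic remainders, thereby sidestepping the delicate piecewise behavior of $K(z)$.
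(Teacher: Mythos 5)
First, a point of reference: the paper itself contains no proof of Theorem~\ref{TheoremC}. It is quoted verbatim from \v{S}tampach and \v{S}\v{t}ov\'i\v{c}ek \cite{SS2016} (as is Theorem~\ref{TheoremB}), so there is no internal argument to compare yours against; your proposal has to stand on its own. On the formal side it does: $\beta'(z)=\pi/(z\ln(q))$, the logarithmic derivative of the factors $\sqrt{z}\,e^{-\ln^2(z)/(2\ln(q))}$ of $A(z)$ gives $\frac{1}{z}\big(\frac12-\frac{\beta(z)}{\pi}\big)$, and pairing conjugate factors in $\ln\big|\big(\tilde{q}e^{2i\beta(z)};\tilde{q}\big)_{\infty}\big|^2$ before termwise differentiation (justified by the geometric decay in $\tilde{q}^k$) does produce $\frac{8\pi}{z\ln(q)}\sin(\beta(z))\cos(\beta(z))\sum_{k\geq1}\tilde{q}^k\big|1-\tilde{q}^ke^{-2i\beta(z)}\big|^{-2}$. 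So $\frac{d}{dz}\big(A(z)\sin(\beta(z))\big)$ is exactly $\frac{A(z)}{z}$ times the three displayed terms; that computation is correct and is indeed where the formula comes from.

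The genuine gap is the error control, which you correctly identify as ``the main obstacle'' but do not close. Theorem~\ref{TheoremB} gives only $B(\omega,z)=1+O\big(z^{-1}\big)$ and $C(\omega,z)=1+O\big(z^{-1}\big)$; an asymptotic relation of this kind carries no information whatsoever about $\partial_z B$ or $\partial_z C$, so the product-rule contribution $\partial_z B(\omega,z)\cdot A(z)\sin(\beta(z))$ cannot be bounded by $\frac{A(z)}{z}\,O\big(\frac{\ln(z)}{z}\big)$ from the stated hypotheses --- and that bound is precisely the content of the theorem beyond the formal calculation. Your sentence asserting that ``the $O\big(z^{-1}\big)$ corrections of $B$ and $C$ differentiate into $O(\ln(z)/z)$'' is the claim to be proved, not a consequence of anything available. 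The subordinate term is additionally problematic because $K(z)$ is a step function, so the decomposition of Theorem~\ref{TheoremB} is not termwise differentiable at the jumps of $K(z)$ (even though the left-hand side is entire), which rules out naive differentiation of the expansion on all of $(0,+\infty)$. Your closing remedy --- differentiate an exact representation of ${_1}\phi_1(0;\omega;q,z)$ and control analytic remainders by Cauchy-type estimates --- is the right strategy and is essentially what \cite{SS2016} carry out, but as written it is a plan rather than a proof: the exact identity and the explicit forms of $B(\omega,z)$ and $C(\omega,z)$ are not reproduced in this paper, and without them the remainder estimate $O\big(\frac{\ln(z)}{z}\big)$ is not established.
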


\section[Asymptotic properties of the function $J_{\nu }\big(z;q^2\big)$ and its derivative]{Asymptotic properties of the function $\boldsymbol{J_{\nu }\big(z;q^2\big)}$\\ and its derivative}\label{asymptotic-properties-J3}

Theorem~2 of \cite[p.~12]{JLC3},
\begin{gather*}\big| J_{\nu }\big(qj_{k\nu};q\big)\big|\leq A_{\nu}(q)
q^{-\big(k+\frac{\nu-2}{2}-\epsilon_k^{(\nu)}\big)^2} , \qquad \text{where} \quad A_{\nu}(q)>0 ,\end{gather*}
establishes a superior bound for the asymptotic behavior of $J_{\nu}\big(qj_{k\nu};q^{2}\big)$ as $k\to\infty$.

We notice that this bound can be enlarged for the cases{\samepage
\begin{gather*}
\big| J_{\mu }\big(qj_{k\nu};q\big)\big|\leq B_{\mu}(q)
q^{-\big(k+\frac{\mu-3}{2}-\epsilon_k^{(\nu)}\big)^2} , \qquad \text{where} \quad B_{\mu}(q)
=\frac{q^{\frac{\mu}{2}\big(\frac{\mu}{2}-1\big)}}{\big(1-q^2\big)\big(q^2;q^2\big)^2_{\infty}} .
\end{gather*}
Its prove is essentially coincident with the corresponding one of the \cite{JLC3} so we omit it.}

However, at least when $\mu=\nu$ or $\mu=\nu+1$, the above estimate for $J_{\mu}\big(qj_{k\nu};q\big)$ does not seem accurate. As a matter of fact, by Theorem~\ref{TheoremA} and (\ref{asymptoticbehavior}), as $k\to\infty$, the product $qj_{k\nu}$ is ``closed'' to the positive zero $ j_{k-1,\nu}$ of the function $J_{\nu }\big(z;q^2\big)$ and, as a consequence, we expect $J_{\nu}\big(qj_{k\nu};q^{2}\big)$ to approach zero. So we look for a better bound when $\mu=\nu$ or $\mu=\nu+1$.

Because of the basic hypergeometric representation~(\ref{def2}), in order to keep the results more general and applicable to other situations, at the final of this section we present a subsection with the corresponding main results for the function ${_1}\phi_1(0;\omega;q,z)$.

\subsection[Asymptotic properties of $J_{\nu}^{\prime}\big(z;q^2\big)$]{Asymptotic properties of $\boldsymbol{J_{\nu}^{\prime}\big(z;q^2\big)}$}
During this subsection and to avoid any confusion, most of the times we prefer to use $\frac{\partial J_{\nu }(z;q^2)}{\partial z}$ rather than $J_{\nu}^{\prime}\big(z;q^2\big)$.

\begin{Theorem}\label{signderivative-J3-2}
Let $\{\theta_m\}_m$ be a sequence such that $0\leq\theta_m<1$ for $m=1,2,3,\ldots$:
\begin{enumerate}\itemsep=0pt
\item[$(i)$] if $\lim\limits_{m\to\infty}m\theta_m=0$ then $\operatorname{sgn}\Big(\frac{\partial J_{\nu}(z;q^2)}{\partial z}_{\big|z=q^{-m+\theta_m}}\Big)=(-1)^{m}$;
\item[$(ii)$] if $\lim\limits_{m\to\infty}m\theta_m=\infty$ then $\operatorname{sgn}\Big(\frac{\partial J_{\nu }(z;q^2)}{\partial z}_{\big|z=q^{-m+\theta_m}}\Big)=(-1)^{m-1}$, being both signs valid for large values of~$m$.
\end{enumerate}
\end{Theorem}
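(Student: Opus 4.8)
The plan is to reduce everything to the asymptotic expansions of Theorems~\ref{TheoremB} and~\ref{TheoremC}, applied with $q$ replaced by $q^2$. Writing $J_{\nu}(z;q^2)=c\,z^{\nu}\,{_1}\phi_1(0;\omega;q^2,W)$ with $\omega=q^{2(\nu+1)}\in[0,1)$, $W=q^2z^2$ and $c=(q^{2(\nu+1)};q^2)_{\infty}/(q^2;q^2)_{\infty}>0$, the product and chain rules give
\begin{gather*}
\frac{\partial J_{\nu}(z;q^2)}{\partial z}=c\,z^{\nu-1}\left(\nu\,{_1}\phi_1(0;\omega;q^2,W)+2W\,\frac{\partial {_1}\phi_1}{\partial W}(0;\omega;q^2,W)\right).
\end{gather*}
First I would substitute the expansions of Theorems~\ref{TheoremB} and~\ref{TheoremC} into this identity. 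Both contributions carry the common positive factor $A(W)/(\omega;q^2)_{\infty}$; since $A(W)>0$, $c>0$ and $z^{\nu-1}>0$, and since the correction term in Theorem~\ref{TheoremB} is smaller than $A(W)$ by a doubly exponential factor $\sim q^{2m^2}$, the sign of the derivative is that of the bracketed combination
\begin{gather*}
S=\big(2(\tfrac12-\tfrac{\beta(W)}{\pi})+\nu\big)\sin\beta(W)+\frac{2\pi}{\ln(q^2)}\cos\beta(W)+\frac{16\pi}{\ln(q^2)}\sum_{k\ge1}\frac{\tilde q^{\,k}\sin^2\beta(W)\cos\beta(W)}{\big|1-\tilde q^{\,k}e^{-2i\beta(W)}\big|^2}+O\!\left(\frac{\ln W}{W}\right).
\end{gather*}

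Next I would specialise to $z=q^{-m+\theta_m}$. Then $W=q^{-2(m-1)+2\theta_m}$, so that $\beta(W)=-(m-1)\pi+\pi\theta_m$, whence $\sin\beta(W)=(-1)^{m-1}\sin(\pi\theta_m)$ and $\cos\beta(W)=(-1)^{m-1}\cos(\pi\theta_m)$, while $-\beta(W)/\pi=(m-1)-\theta_m$ and $\ln W/W\to0$ exponentially fast. Factoring out $(-1)^{m-1}$ turns $S$ into $(-1)^{m-1}T+o(1)$, where
\begin{gather*}
T=\big(2(m-\tfrac12-\theta_m)+\nu\big)\sin(\pi\theta_m)+\frac{\pi}{\ln q}\cos(\pi\theta_m)+O\big(\sin^2(\pi\theta_m)\big),
\end{gather*}
the last term being bounded because the series over $k$ converges. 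It then remains to determine $\operatorname{sgn}(T)$, keeping in mind that $\tfrac{\pi}{\ln q}<0$. In case $(i)$, $m\theta_m\to0$ forces $\theta_m\to0$, so the first term is $O(m\theta_m)\to0$ and the $O(\sin^2)$ term vanishes, leaving $T\to\tfrac{\pi}{\ln q}<0$; hence $\operatorname{sgn}(T)=-1$ and $\operatorname{sgn}(\partial_zJ_{\nu})=(-1)^{m-1}(-1)=(-1)^{m}$. In case $(ii)$, $m\theta_m\to\infty$: whenever $\theta_m$ stays away from $1$ one has $m\sin(\pi\theta_m)\to\infty$, so the nonnegative term $2(m-\tfrac12-\theta_m)\sin(\pi\theta_m)$ dominates the $O(1)$ remainder and $T>0$; and if $\theta_m\to1$ this term may degenerate, but then $\cos(\pi\theta_m)\to-1$ makes $\tfrac{\pi}{\ln q}\cos(\pi\theta_m)\to-\tfrac{\pi}{\ln q}>0$, which again forces $T>0$. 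Either way $\operatorname{sgn}(T)=+1$ and $\operatorname{sgn}(\partial_zJ_{\nu})=(-1)^{m-1}$.

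The delicate points, which I expect to be the main obstacle, are twofold. First, in case $(i)$ the only surviving term of $T$ is the $O(1)$ constant $\tfrac{\pi}{\ln q}$, so I must verify that \emph{every} discarded remainder---the Theorem~\ref{TheoremB} correction, the factors $B,C=1+O(W^{-1})$, and the $O(\ln W/W)$ of Theorem~\ref{TheoremC}---is genuinely $o(1)$; this is safe because $W\to\infty$ like $q^{-2m}$ renders all of them exponentially small. Second, the case-$(ii)$ analysis near $\theta_m\to1$ demands the fallback to the constant term together with a uniform argument that $T$ does not change sign in the interior of the range. Geometrically this reflects the fact that $J_{\nu}(\cdot;q^2)$ has a single extremum between the consecutive zeros $j_{m-1,\nu}$ and $j_{m\nu}$, located at $\theta_m^{\ast}\sim 1/(m|\ln q|)$; this is precisely the threshold $m\theta_m\asymp\mathrm{const}$ separating the two regimes, and it is exactly the value the hypotheses $m\theta_m\to0$ and $m\theta_m\to\infty$ are designed to steer clear of.
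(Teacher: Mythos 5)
Your proposal follows essentially the same route as the paper's proof: the same decomposition $J_{\nu}(z;q^2)=c\,z^{\nu}\,{_1}\phi_1\big(0;q^{2(\nu+1)};q^2,q^2z^2\big)$, substitution of Theorems~\ref{TheoremB} and~\ref{TheoremC} with $q$ shifted to $q^2$ at $z=q^{-m+\theta_m}$, factoring out $(-1)^{m-1}$ and the positive factor $A\big(q^{2-2m+2\theta_m}\big)$, and identification of the cosine term (case (i)) or the sine term (case (ii)) as dominant. The one substantive difference is that in case (ii) you explicitly treat the sub-case $\theta_m\to1$, where $(2m-1-2\theta_m)\sin(\pi\theta_m)$ can degenerate and positivity must instead be read off from $\frac{\pi}{\ln q}\cos(\pi\theta_m)>0$; the paper simply declares the sine term dominant, so on this point your argument is the more complete of the two.
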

\begin{proof} By the definition of the Hahn--Exton $q$-Bessel function~(\ref{def}) we may write
\begin{gather*}
J_{\nu}(z;q^2)= \frac{\big(q^{2(\nu+1)};q^2\big)_{\infty}}{\big(q^2;q^2\big)_{\infty}}z^{\nu}{_1}\phi_1\big(0;q^{2(\nu+1)};q^2,q^2z^2\big).
\end{gather*}
Computing its derivative one gets
\begin{gather*}
\frac{\partial J_{\nu }\big(z;q^2\big)}{\partial z}= \frac{\big(q^{2(\nu+1)};q^2\big)_{\infty}}{\big(q^2;q^2\big)_{\infty}} \\
\hphantom{\frac{\partial J_{\nu }\big(z;q^2\big)}{\partial z}=}{}\times
\left(\nu z^{\nu-1}{_1}\phi_1\big(0;q^{2(\nu+1)};q^2,q^2z^2\big)+ 2q^2z^{\nu+1}\frac{\partial {_1}\phi_1\big(0;q^{2(\nu+1)};q^2,y\big)}{\partial y}_{\big|y=q^{2}z^2}\right).
\end{gather*}

Now, by Theorems~\ref{TheoremB} and~\ref{TheoremC}, with the notation $\omega=q^{2(\nu+1)}$ and~(\ref{A.1}), (\ref{A.2}) with $q$ shifted to $q^2$, as $z\to+\infty$, we have
\begin{gather}
\frac{\partial J_{\nu }\big(z;q^2\big)}{\partial z}\equiv J_{\nu}^{\prime}\big(z;q^2\big)
=\frac{z^{\nu-1}}{(q^2;q^2)_{\infty}}\Bigg\{A\big(q^2z^2\big)\Bigg\{\left(\nu B\big(\omega,q^2z^2\big)-
\frac{2}{\pi}\beta\big(q^2z^2\big)+1\right)\nonumber\\
\hphantom{\frac{\partial J_{\nu }\big(z;q^2\big)}{\partial z}\equiv}{} \times \sin \big(\beta\big(q^2z^2\big)\big)+\frac{\pi}{\ln{q}}\cos{\big(\beta\big(q^2z^2\big)\big)} \nonumber\\
\hphantom{\frac{\partial J_{\nu }\big(z;q^2\big)}{\partial z}\equiv}{} +\frac{8\pi}{\ln{q}}\!\sum_{k=1}^{\infty}\!\frac{
\tilde{q}^k}{\big|1-\tilde{q}^ke^{-2i\beta(q^2z^2)}\big|^2}
\sin^2{\big(\beta\big(q^2z^2\big)\big)}\cos{\big(\beta\big(q^2z^2\big)\big)}+
O\!\left(\frac{\ln\big(q^2z^2\big)}{q^2z^2}\right)\!\Bigg\}\!\nonumber \\
\hphantom{\frac{\partial J_{\nu }\big(z;q^2\big)}{\partial z}\equiv}{}
+(-1)^{K\big(q^2z^2\big)+1} q^{\big(K\big(q^2z^2\big)+1\big)K\big(q^2z^2\big)} \omega^{K\big(q^2z^2\big)+1}\nu B\big(\omega,q^2z^2\big)\nonumber\\
\hphantom{\frac{\partial J_{\nu }\big(z;q^2\big)}{\partial z}\equiv}{}
\times \frac{\big(q^{2K(q^2z^2)+4}z^2;q^2\big)_{\infty}}{\big(q^2;q^2\big)_{\infty}}C\big(\omega,q^2z^2\big)\Bigg\} .\label{derivative-J3}
\end{gather}
Taking into account that, shifting $q$ to $q^2$ and putting $z=q^{-m+\theta_m}$ for $m\in\mathbb{N}$, $\beta\big(q^2z^2\big)=(-m+1+\theta_m)\pi$, $K\big(q^2z^2\big)=\big[m-\frac{1}{2}-\theta_m\big]$, identity~(\ref{derivative-J3}) gives, as $m\to\infty$,
\begin{gather}
 \frac{\partial J_{\nu }\big(z;q^2\big)}{\partial z}_{\big|z=q^{-m+\theta_m}}=
\frac{q^{(-m+\theta_m)(\nu-1)}}{\big(q^2;q^2\big)_{\infty}}\Bigg\{A\big(q^{2-2m+2\theta_m}\big)(-1)^{m-1} \nonumber\\
\qquad {} \times
\Bigg\{\big(\nu B\big(\omega,q^{2-2m+2\theta_m}\big)+2m-1-2\theta_m\big)
\sin{(\pi\theta_m)}+\frac{\pi}{\ln{q}}\cos{(\pi\theta_m)}\nonumber\\
\qquad{} +\frac{8\pi}{\ln{q}} \sum_{k=1}^{\infty}\frac{\tilde{q}^k}{\big|1-\tilde{q}^ke^{-2i\theta_m\pi}\big|^2}
\sin^2{ (\pi\theta_m )}\cos{ (\pi\theta_m )}
+O\left(\frac{\ln\big(q^{2-2m+2\theta_m}\big)}{q^{2-2m+2\theta_m}}\right)\Bigg\}\nonumber\\
\qquad {} +(-1)^{[m-\frac{1}{2}-\theta_m]+1}q^{([m-\frac{1}{2}-\theta_m]+1)[m-\frac{1}{2}-\theta_m]}
\omega^{[m-\frac{1}{2}-\theta_m]+1}\nonumber \\
\qquad{} \times\nu B\big(\omega,q^{2-2m+2\theta_m}\big)\frac{\big(q^{2[m-\frac{1}{2}-\theta_m]+4-2m+2\theta_m};q^2\big)_{\infty}}{\big(q^2;q^2\big)_{\infty}}
C\big(\omega,q^{2-2m+2\theta_m}\big)\Bigg\} .\label{derivativeJ3-at-q}
\end{gather}
We notice that, for large values of $m$,
\begin{gather*} A\big(q^{-m+\theta_m}\big)=2 q^{-\frac{(m+1)m}{2}+m\theta_m+\frac{\theta_m(1-\theta_m)}{2}+\frac{\pi^2}{3\ln{(q)}}-\frac{1}{12}}
\big|\big(e^{2i\pi\theta_m}e^{\frac{4\pi^2}{\ln{(q)}}}; e^{\frac{4\pi^2}{\ln{(q)}}}\big)_{\infty}\big|^2,
\end{gather*} hence
\begin{gather}
A\big(q^{2-2m+2\theta_m}\big) = 2q^{-m(m-1)+2(m-1)\theta_m+\theta_m(1-\theta_m)+\frac{2\pi^2}{3\ln{(q)}}-\frac{1}{6}}
\big|\big(e^{2i\pi\theta_m}e^{\frac{2\pi^2}{\ln{(q)}}};
e^{\frac{2\pi^2}{\ln{(q)}}}\big)_{\infty}\big|^2>0\!\!\!\!\label{Aq2}
\end{gather}
and, by Theorem~\ref{TheoremB}, $B(\omega,z)=1+O\big(z^{-1}\big)$, $C(\omega,z)=1+O\big(z^{-1}\big)$ as $z\to+\infty$. We also
note that $\big[m-\frac{1}{2}-\theta_m\big]$ equals $m-1$ or $m-2$.

Now, from (\ref{derivativeJ3-at-q}), we conclude the following:

On one hand, if $\lim\limits_{m\to\infty}m\theta_m=0$ then, as $m\to\infty$, the dominant term of the sign of~(\ref{derivativeJ3-at-q}) is
$(-1)^{m-1}\frac{\pi}{\ln{(q)}}\cos{(\pi\theta_m)}$, with $\frac{\pi}{\ln{(q)}}<0$. This proves part~(i) of the theorem.

On the other hand, if $\lim\limits_{m\to\infty}m\theta_m=\infty$ then, as $m\to\infty$, the dominant term for the sign turns to be
$(-1)^{m-1}(2m-1-2\theta_m)\sin{(\pi\theta_m)}$, which proves part~(ii).
\end{proof}

\begin{Remark}The assumption of Theorem \ref{signderivative-J3-2} requiring that $0\leq\theta_m<1$ for $m=1,2,3,\ldots$, can be weakened
(with minor changes in the corresponding proof) to $0\leq\theta_m<1$ for sufficient large values of~$m$.

Also, when the sequence $\{m\theta_m\}_{m}$ converges to a strictly positive real number, or when it is a bounded but not convergent sequence, then it is also possible to state conditions in order to obtain conclusions.
\end{Remark}

We notice that if $\{\theta_m^*\}_m$ is any sequence which satisfies $0<\theta_m^*<1$ for all $m=1,2,3,\ldots$ and $\lim\limits_{m\to\infty}m\theta_m^*=0$ then, by (\ref{derivativeJ3-at-q}), we conclude that part~(i) of the Theorem~\ref{signderivative-J3-2} remains true for every other sequence $\{\gamma_m\}_m$ such that $0<\gamma_m\leq\theta_m^*$. This implies the next result.

\begin{Corollary}\label{monotonyJ3-1} Let $\{\theta_m^*\}_m$, with $0<\theta_m^*<1$, be a sequence such that $\lim\limits_{m\to\infty}m\theta_m^*=0$. Then, for large values of $m$, the sign of $\frac{\partial J_{\nu }(z;q^2)}{\partial z}$ remains constant in each interval $\big]q^{-m+\theta_m^{*}},q^{-m}\big[$.
\end{Corollary}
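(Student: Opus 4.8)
The plan is to reduce the statement to the sign analysis already carried out for part~(i) of Theorem~\ref{signderivative-J3-2}, exploiting that the interval $\big]q^{-m+\theta_m^{*}},q^{-m}\big[$ is exactly the image of the parameter range $\gamma\in\,]0,\theta_m^*[$ under the map $\gamma\mapsto q^{-m+\gamma}$. Since $0<q<1$ this map is strictly decreasing, so every $z$ in the open interval is written uniquely as $z=q^{-m+\gamma}$ with $0<\gamma<\theta_m^*$, the endpoints $\gamma=0$ and $\gamma=\theta_m^*$ corresponding to $z=q^{-m}$ and $z=q^{-m+\theta_m^*}$. First I would record the elementary but decisive observation that $0<\gamma<\theta_m^*$ forces $0<m\gamma<m\theta_m^*$, whence $\lim_{m\to\infty}m\gamma_m=0$ for \emph{any} choice $\gamma_m\in\,]0,\theta_m^*[$; this is precisely what makes the hypothesis of part~(i) available along the whole interval.

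Next I would revisit~(\ref{derivativeJ3-at-q}) with $\theta_m$ replaced by a generic $\gamma\in\,]0,\theta_m^*[$. The prefactor $\frac{q^{(-m+\gamma)(\nu-1)}}{(q^2;q^2)_\infty}$ is positive and, by~(\ref{Aq2}), so is $A\big(q^{2-2m+2\gamma}\big)$, so the sign is governed by the braced quantity. As in the proof of part~(i), the contribution $\big(\nu B+2m-1-2\gamma\big)\sin(\pi\gamma)$ is $O(m\gamma)$, the series term is $O(\gamma^2)$, the remainder is exponentially small in $m$, and the last line carries the super-exponentially small factor $q^{([m-\frac12-\gamma]+1)[m-\frac12-\gamma]}\omega^{[m-\frac12-\gamma]+1}$; all of these are negligible against $A\big(q^{2-2m+2\gamma}\big)(-1)^{m-1}\frac{\pi}{\ln q}\cos(\pi\gamma)$, whose sign is $(-1)^m$ because $\frac{\pi}{\ln q}<0$ and $\cos(\pi\gamma)>0$ for small $\gamma$. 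This is exactly part~(i) applied to the sequence $\{\gamma_m\}$, giving $\operatorname{sgn}\big(\frac{\partial J_\nu(z;q^2)}{\partial z}\big)=(-1)^m$ at each point $q^{-m+\gamma_m}$.

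To promote this sequence-wise statement into constancy throughout the interval I would argue by contradiction, which is the clean way to extract a \emph{single} threshold $M$ valid for all admissible $\gamma$ simultaneously. Suppose the sign failed to be constant on $\big]q^{-m+\theta_m^{*}},q^{-m}\big[$ for infinitely many $m$; for each such $m$ non-constancy yields a point $z_m=q^{-m+\gamma_m}$, with $\gamma_m\in\,]0,\theta_m^*[$, at which the sign is not $(-1)^m$ (either the opposite sign or a zero). Completing $\{\gamma_m\}$ arbitrarily inside $]0,\theta_m^*[$ on the remaining indices produces a sequence with $0<\gamma_m<\theta_m^*$, hence $m\gamma_m\to0$; by the previous paragraph its sign along $q^{-m+\gamma_m}$ must equal $(-1)^m$ for all large $m$, contradicting the construction. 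Therefore, for all sufficiently large $m$ the derivative keeps the constant sign $(-1)^m$ throughout the interval, which is the assertion.

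The step I expect to be the main obstacle is the \emph{uniformity} in $\gamma$ of the asymptotics behind~(\ref{derivativeJ3-at-q}): Theorem~\ref{signderivative-J3-2} is phrased for a fixed sequence, so a priori the index from which the sign stabilises could depend on the sequence. The contradiction argument is designed to bypass exactly this, but one must still check that the bounds on the subdominant terms ($O(m\gamma)$, $O(\gamma^2)$, and the exponential and super-exponential factors) depend on $\gamma$ only through $m\theta_m^*\to0$ and $\theta_m^*\to0$, so that $\cos(\pi\gamma)$ stays bounded away from $0$ and the leading term genuinely dominates for every admissible $\gamma$ at once.
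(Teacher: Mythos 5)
Your proposal is correct and follows essentially the same route as the paper, which likewise deduces the corollary by observing that every point of $\big]q^{-m+\theta_m^{*}},q^{-m}\big[$ has the form $q^{-m+\gamma_m}$ with $0<\gamma_m<\theta_m^*$, hence $m\gamma_m\to0$, so that part~(i) of Theorem~\ref{signderivative-J3-2} (via \eqref{derivativeJ3-at-q}) applies to any such sequence. Your contradiction argument additionally makes explicit the uniformity-in-$\gamma$ point (a single threshold $M$ valid for all points of the interval) that the paper passes over in silence, which is a worthwhile refinement rather than a different method.
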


In particular, because of (\ref{asymptoticbehavior}), it follows immediately the following corollary.
\begin{Corollary}\label{monotonyJ3-2}
Considering $\theta_m^*=\alpha_m^{(\nu)}$, for $m=1,2,3,\ldots$, of Theorem~{\rm \ref{TheoremA}} then, for large values of~$m$,
the sign of $\frac{\partial J_{\nu }(z;q^2)}{\partial z}$ remains constant in each interval $\big]q^{-m+\alpha_m^{(\nu)}},q^{-m}\big[$.
\end{Corollary}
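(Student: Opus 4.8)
The plan is to realize Corollary~\ref{monotonyJ3-2} as the special instance of Corollary~\ref{monotonyJ3-1} obtained by taking the auxiliary sequence to be $\theta_m^*=\alpha_m^{(\nu)}$. Thus the entire task reduces to checking that this particular choice satisfies the two hypotheses required by Corollary~\ref{monotonyJ3-1}, namely that $0<\alpha_m^{(\nu)}<1$ holds (at least for large $m$) and that $\lim_{m\to\infty}m\,\alpha_m^{(\nu)}=0$.

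First I would establish positivity and boundedness. The inequality $\alpha_m^{(\nu)}(q^2)>0$ is already part of the statement of Theorem~\ref{TheoremA}, since there one has $0<\epsilon_k^{(\nu)}(q^2)<\alpha_k^{(\nu)}(q^2)$, forcing $\alpha_k^{(\nu)}(q^2)>0$. For the upper bound $\alpha_m^{(\nu)}<1$ I would invoke the asymptotic estimate~(\ref{asymptoticbehavior}), which gives $\alpha_m^{(\nu)}(q^2)=\mathcal{O}(q^{2m})\to 0$ as $m\to\infty$; consequently $\alpha_m^{(\nu)}<1$ for all sufficiently large $m$, which is precisely the regime in which Corollary~\ref{monotonyJ3-1} is applied.

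Next I would verify the decay condition on $m\,\alpha_m^{(\nu)}$. Using~(\ref{asymptoticbehavior}) once more, we have $m\,\alpha_m^{(\nu)}(q^2)=\mathcal{O}\big(m\,q^{2m}\big)$, and since $0<q<1$ the geometric factor $q^{2m}$ decays faster than the linear factor $m$ grows, so $m\,q^{2m}\to 0$ and hence $\lim_{m\to\infty}m\,\alpha_m^{(\nu)}=0$. With both hypotheses confirmed, Corollary~\ref{monotonyJ3-1} applies verbatim and yields that, for large $m$, the sign of $\frac{\partial J_{\nu}(z;q^2)}{\partial z}$ is constant on each interval $\big]q^{-m+\alpha_m^{(\nu)}},q^{-m}\big[$, which is the assertion.

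As for difficulty, there is essentially no obstacle here: the whole argument is a routine verification that $\{\alpha_m^{(\nu)}\}_m$ meets the hypotheses of the preceding corollary, the only quantitative point being the elementary fact that exponential decay dominates linear growth. The substantive analytic work has already been carried out in Theorem~\ref{signderivative-J3-2} and Corollary~\ref{monotonyJ3-1}, so this corollary is genuinely immediate from~(\ref{asymptoticbehavior}).
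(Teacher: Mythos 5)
Your proposal is correct and follows exactly the route the paper takes: the paper derives Corollary~\ref{monotonyJ3-2} as an immediate instance of Corollary~\ref{monotonyJ3-1}, using~(\ref{asymptoticbehavior}) to see that $m\,\alpha_m^{(\nu)}=\mathcal{O}\big(m\,q^{2m}\big)\to 0$. Your additional check that $0<\alpha_m^{(\nu)}<1$ for large $m$ is a sensible (if routine) detail the paper leaves implicit.
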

We end this subsection with the following theorem.
\begin{Theorem}\label{asymptotic-derivativeJ3-at-zeros}
For large values of $m$,
\begin{gather*}\frac{\partial J_{\nu}\big(z;q^2\big)}{\partial z}_{\big|z=j_{m\nu}}
\equiv J_{\nu}^{\prime}\big(j_{m\nu};q^{2}\big)=
O\big(q^{-m(m+\nu-2)}\big) ,\qquad \text{as}\quad m\to\infty .\end{gather*}
\end{Theorem}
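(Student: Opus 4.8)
The plan is to feed the explicit formula \eqref{derivativeJ3-at-q} with the tailored choice $\theta_m=\epsilon_m^{(\nu)}\big(q^2\big)$. Indeed, by Theorem~\ref{TheoremA} the $m$-th positive zero satisfies $j_{m\nu}=q^{-m+\epsilon_m^{(\nu)}}$ for all large $m$, and the bound $0<\epsilon_m^{(\nu)}<\alpha_m^{(\nu)}$ together with \eqref{asymptoticbehavior} gives $\epsilon_m^{(\nu)}=O\big(q^{2m}\big)$, so that $0\le\theta_m<1$ for large $m$ and, crucially, $\lim_{m\to\infty}m\theta_m=0$. Thus evaluating $J_\nu'\big(z;q^2\big)$ at $z=j_{m\nu}$ falls squarely in the case~(i) regime of Theorem~\ref{signderivative-J3-2}, and \eqref{derivativeJ3-at-q} applies verbatim with $\theta_m=\epsilon_m^{(\nu)}$. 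From here the task reduces to tracking the order in $q$ of each factor in \eqref{derivativeJ3-at-q}.

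First I would isolate the prefactor $q^{(-m+\theta_m)(\nu-1)}/\big(q^2;q^2\big)_\infty$, which, since $\theta_m\to0$, is $O\big(q^{-m(\nu-1)}\big)$. The dominant bracketed contribution is the one carrying $A\big(q^{2-2m+2\theta_m}\big)$: using \eqref{Aq2}, the exponent of $q$ in $A\big(q^{2-2m+2\theta_m}\big)$ equals $-m(m-1)$ up to the corrections $2(m-1)\theta_m+\theta_m(1-\theta_m)=O\big(m\,q^{2m}\big)\to0$ and the bounded constant $\frac{2\pi^2}{3\ln q}-\frac16$, while the theta-type product $\big|\big(e^{2i\pi\theta_m}e^{2\pi^2/\ln q};e^{2\pi^2/\ln q}\big)_\infty\big|^2$ converges, as $\theta_m\to0$, to a finite nonzero value and is therefore bounded; hence $A\big(q^{2-2m+2\theta_m}\big)=O\big(q^{-m(m-1)}\big)$. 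It then remains to see that the inner brace multiplying $A$ is $O(1)$: as $\theta_m\to0$ one has $\sin(\pi\theta_m)=O(\theta_m)=O\big(q^{2m}\big)$, so the summand $\big(\nu B+2m-1-2\theta_m\big)\sin(\pi\theta_m)=O\big(m\,q^{2m}\big)\to0$ despite the factor $2m$; the cosine term $\frac{\pi}{\ln q}\cos(\pi\theta_m)$ stays bounded and supplies the leading nonzero value $\frac{\pi}{\ln q}$; the series term carries $\sin^2(\pi\theta_m)=O\big(q^{4m}\big)\to0$; and the remainder $O\big(\ln\big(q^{2-2m+2\theta_m}\big)/q^{2-2m+2\theta_m}\big)=O\big(m\,q^{2m}\big)\to0$. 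Consequently this first bracketed term is $O\big(q^{-m(m-1)}\big)$.

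Next I would dispatch the second term of the outer brace of \eqref{derivativeJ3-at-q}. Because $\theta_m\to0^+$, for large $m$ the integer part $\big[m-\frac12-\theta_m\big]$ equals $m-1$, so this term has modulus of order $q^{m(m-1)}\,\omega^{m}=q^{m(m-1)+2(\nu+1)m}=q^{m(m+2\nu+1)}$, with the $q$-Pochhammer ratio and $B,C$ tending to $1$; since $q^{m(m+2\nu+1)}\to0$, this contribution is exponentially smaller than the first and does not affect the order. Combining, the outer brace is $O\big(q^{-m(m-1)}\big)$, and multiplying by the prefactor $O\big(q^{-m(\nu-1)}\big)$ yields $J_\nu'\big(j_{m\nu};q^2\big)=O\big(q^{-m(m-1)-m(\nu-1)}\big)=O\big(q^{-m(m+\nu-2)}\big)$, as claimed.

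The hard part is verifying that the inner brace genuinely remains $O(1)$: several of its summands carry an explicit factor $m$, most notably $2m-1-2\theta_m$, so boundedness is not automatic and hinges entirely on the estimate $m\theta_m\to0$ coming from Theorem~\ref{TheoremA} and \eqref{asymptoticbehavior}, which forces every $\theta_m$-dependent summand to vanish faster than the $m$-growth can spoil. A secondary care is to confirm that the $\theta_m$-dependent corrections in the exponent of $A$ in \eqref{Aq2} are $o(1)$, so that they leave the leading order $q^{-m(m-1)}$ of $A$ untouched rather than shifting it.
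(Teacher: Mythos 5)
Your proposal is correct and follows essentially the same route as the paper: substitute $\theta_m=\epsilon_m^{(\nu)}$ into \eqref{derivativeJ3-at-q} and \eqref{Aq2}, use $m\epsilon_m^{(\nu)}\to0$ (from Theorem~\ref{TheoremA} and \eqref{asymptoticbehavior}) to identify $(-1)^{m-1}\frac{\pi}{\ln q}\cos\big(\pi\epsilon_m^{(\nu)}\big)$ times the prefactor and $A\big(q^{2-2m+2\epsilon_m^{(\nu)}}\big)$ as the dominant contribution, and read off the exponent $-m(m-1)-m(\nu-1)=-m(m+\nu-2)$. Your term-by-term verification that the inner brace is $O(1)$ and that the second bracketed term is exponentially negligible is more explicit than the paper's, which simply asserts the dominant term, but the argument is the same.
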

\begin{proof}
By Theorem~\ref{TheoremA}, consider $j_{m\nu}=q^{-m+\epsilon_m^{(\nu)}}$, where $0<\epsilon_m^{(\nu)}<\alpha_m^{(\nu)}$, and replace $\theta_m$ by~$\epsilon_m^{(\nu)}$ in~(\ref{derivativeJ3-at-q}) and~(\ref{Aq2}).

By (\ref{asymptoticbehavior}) we have $\lim\limits_{m\to\infty}m\alpha_m^{(\nu)}=0$ hence, by Theorem~\ref{TheoremA}, we also have $\lim\limits_{m\to\infty}m\epsilon_m^{(\nu)}=0$. Furthermore, taking into consideration that
\begin{gather}\label{Aq2epsilon}
A\big(q^{2-2m+2\epsilon_m^{(\nu)}}\big)=2q^{-m(m-1)+2(m-1)\epsilon_m^{(\nu)}+\epsilon_m^{(\nu)}(1-\epsilon_m^{(\nu)})+
\frac{2\pi^2}{3\ln{(q)}}-\frac{1}{6}}\big|\big(e^{2i\pi\epsilon_m^{(\nu)}}e^{\frac{2\pi^2}{\ln{(q)}}};
e^{\frac{2\pi^2}{\ln{(q)}}}\big)_{\infty}\big|^2
\end{gather}
then, for large values of $m$, the resulting dominant term of $J_{\nu}^{\prime}\big(j_{m\nu};q^{2}\big)$ from~(\ref{derivativeJ3-at-q}) is
\begin{gather}\label{dominant-term}
\frac{q^{(-m+\epsilon_m^{(\nu)})(\nu-1)}}{\big(q^2;q^2\big)_{\infty}}A\big(q^{2-2m+2\epsilon_m^{(\nu)}}\big)
(-1)^{m-1}\frac{\pi}{\ln{q}}\cos{\big(\pi\epsilon_m^{(\nu)}\big)} .
\end{gather}
Introducing (\ref{Aq2epsilon}) into (\ref{dominant-term}) and, again, using $\lim_{m\to\infty}m\alpha_m^{(\nu)}=0$, then we immediately conclude that
\begin{gather*}
\frac{\partial J_{\nu}\big(z;q^2\big)}{\partial z}_{\big|z=j_{m\nu}}\equiv
J_{\nu}^{\prime}\big(j_{m\nu};q^{2}\big)=O\big(q^{-m(m+\nu-2)}\big) ,\qquad \text{as}\quad m\to\infty .\tag*{\qed}
\end{gather*}\renewcommand{\qed}{}
\end{proof}

\subsection[Behavior of $J_{\nu }\big(qj_{k\nu};q^2\big)$]{Behavior of $\boldsymbol{J_{\nu }\big(qj_{k\nu};q^2\big)}$}

We begin this subsection by quoting the following theorem, where $j_{k\nu}$, $\epsilon_{k}^{(\nu)}\equiv\epsilon_{k}^{(\nu)}\big(q^2\big)$ and $\alpha_{k}^{(\nu)}\equiv\alpha_{k}^{(\nu)}\big(q^2\big)$ respects the notations of~(\ref{ort}) and Theorem~\ref{TheoremA}.
\begin{Theorem}\label{shiftedzero}
For large values of $k$,
\begin{gather*}qj_{k\nu}\in {}\big ] j_{k-1,\nu} , q^{-k+1} \big[ .\end{gather*}
\end{Theorem}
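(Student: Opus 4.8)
The plan is to prove the two inequalities $j_{k-1,\nu}<qj_{k\nu}$ and $qj_{k\nu}<q^{-k+1}$ separately, the second being immediate and the first requiring a sign analysis. First I would dispose of the upper bound: by Theorem~\ref{TheoremA}, for large $k$ we have $j_{k\nu}=q^{-k+\epsilon_k^{(\nu)}}$ with $\epsilon_k^{(\nu)}>0$, so $qj_{k\nu}=q^{-k+1+\epsilon_k^{(\nu)}}$, and since $0<q<1$ with $\epsilon_k^{(\nu)}>0$ this is strictly less than $q^{-k+1}$. Rewriting it as $qj_{k\nu}=q^{-(k-1)+\epsilon_k^{(\nu)}}$ also locates it in the interval $I_{k-1}:=\,]q^{-(k-1)+\alpha_{k-1}^{(\nu)}},q^{-(k-1)}[$, provided $\epsilon_k^{(\nu)}<\alpha_{k-1}^{(\nu)}$; this holds for large $k$ because $\epsilon_k^{(\nu)}<\alpha_k^{(\nu)}$ and, by~(\ref{asymptoticbehavior}), $\alpha_k^{(\nu)}=\mathcal{O}(q^{2k})$ is eventually decreasing, so $\alpha_k^{(\nu)}<\alpha_{k-1}^{(\nu)}$.

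Next I would exploit the monotone structure on $I_{k-1}$. By Corollary~\ref{monotonyJ3-2} (with $m=k-1$), the derivative $\frac{\partial J_{\nu}(z;q^2)}{\partial z}$ keeps a constant sign on $I_{k-1}$, so $J_{\nu}(\cdot\,;q^2)$ is strictly monotone there and has a single zero in $I_{k-1}$; since $0<\epsilon_{k-1}^{(\nu)}<\alpha_{k-1}^{(\nu)}$, that zero is exactly $j_{k-1,\nu}=q^{-(k-1)+\epsilon_{k-1}^{(\nu)}}\in I_{k-1}$. By Theorem~\ref{signderivative-J3-2}(i) applied at $z=j_{k-1,\nu}$ (here $m=k-1$, $\theta_m=\epsilon_{k-1}^{(\nu)}$, and $(k-1)\epsilon_{k-1}^{(\nu)}\to0$ by~(\ref{asymptoticbehavior})), this constant sign is $(-1)^{k-1}$. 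Hence $J_{\nu}(z;q^2)$ has sign $(-1)^{k-1}$ for $z\in\,]j_{k-1,\nu},q^{-k+1}[$ and the opposite sign $(-1)^{k}$ for $z\in\,]q^{-(k-1)+\alpha_{k-1}^{(\nu)}},j_{k-1,\nu}[$. It therefore suffices to show that $J_{\nu}(qj_{k\nu};q^2)$ has sign $(-1)^{k-1}$.

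The decisive step is to read off this last sign from the orthogonality normalization rather than from the asymptotic expansion. Since $\eta_{k,\nu}=\int_0^1 tJ_{\nu}^2(qj_{k\nu}t;q^2)\,{\rm d}_qt>0$, the second equality in~(\ref{eta}) gives $\frac{q-1}{2j_{k\nu}}q^{\nu-2}J_{\nu}(qj_{k\nu};q^2)J_{\nu}^{\prime}(j_{k\nu};q^2)=\eta_{k,\nu}>0$. The prefactor $\frac{q-1}{2j_{k\nu}}q^{\nu-2}$ is negative (as $q-1<0$, $j_{k\nu}>0$, $q^{\nu-2}>0$), so $J_{\nu}(qj_{k\nu};q^2)$ and $J_{\nu}^{\prime}(j_{k\nu};q^2)$ have opposite signs. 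By Theorem~\ref{signderivative-J3-2}(i) applied at $z=j_{k\nu}$ (now $m=k$, $\theta_m=\epsilon_k^{(\nu)}$, $k\epsilon_k^{(\nu)}\to0$) we have $\operatorname{sgn}J_{\nu}^{\prime}(j_{k\nu};q^2)=(-1)^{k}$, whence $\operatorname{sgn}J_{\nu}(qj_{k\nu};q^2)=(-1)^{k-1}$. Comparing with the sign pattern on $I_{k-1}$ forces $qj_{k\nu}\in\,]j_{k-1,\nu},q^{-k+1}[$, which is the assertion.

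I expect the main obstacle to be the bookkeeping that places $qj_{k\nu}$ on the correct side of $j_{k-1,\nu}$: one must simultaneously certify that $qj_{k\nu}$ lies in the monotone interval $I_{k-1}$ (the inequality $\epsilon_k^{(\nu)}<\alpha_{k-1}^{(\nu)}$), match the two constant-sign regions to $(-1)^{k-1}$ and $(-1)^{k}$, and verify the ``$m\theta_m\to0$'' hypotheses needed to invoke Theorem~\ref{signderivative-J3-2}(i) at both $j_{k-1,\nu}$ and $j_{k\nu}$. The key simplification is using the positivity of $\eta_{k,\nu}$ together with~(\ref{eta}) to extract $\operatorname{sgn}J_{\nu}(qj_{k\nu};q^2)$ directly, thereby avoiding a delicate comparison of the two terms in the asymptotic expansion of Theorem~\ref{TheoremB}, where the nominally subdominant term in fact controls the sign at the shifted zero.
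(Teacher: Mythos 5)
Your proposal is correct and follows essentially the same route as the paper: the upper bound from $qj_{k\nu}=q^{-k+1+\epsilon_k^{(\nu)}}$, the sign of the product $J_{\nu}\big(qj_{k\nu};q^2\big)J_{\nu}^{\prime}\big(j_{k\nu};q^2\big)$ extracted from the positivity of $\eta_{k,\nu}$ via~(\ref{eta}), the alternation $\operatorname{sgn}J_{\nu}^{\prime}\big(j_{m\nu};q^2\big)=(-1)^m$ from Theorem~\ref{signderivative-J3-2}(i), and monotonicity on the interval from Corollary~\ref{monotonyJ3-2}. Your version merely makes explicit some bookkeeping (e.g., that $qj_{k\nu}$ lies in the monotone interval around $j_{k-1,\nu}$) that the paper leaves implicit.
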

\begin{proof}From (\ref{ort}) with $m=n=k$ and by~(vii) of Proposition~5 \cite[p.~330]{JLC3}, we get
\begin{gather}\label{Ineq1}
J_{\nu}\big(qj_{k\nu};q^2\big)J_{\nu}^{\prime}\big(j_{k\nu};q^2\big)<0 .
\end{gather}
However, by (\ref{asymptoticbehavior}),
$\lim\limits_{k\to\infty}k\alpha_k^{(\nu)}=0$, hence one may
conclude, by Theorem \ref{signderivative-J3-2} and Corollary
\ref{monotonyJ3-2}, that the sign of
\begin{gather*}
\frac{\partial J_{\nu}\big(z;q^2\big)}{\partial z}= J_{\nu}^{\prime}\big(z;q^2\big)
\end{gather*}
in the interval $\big]q^{-k+\alpha_k^{(\nu)}},q^{-k}\big[$ is the opposite to the sign in $\big]q^{-k+1+\alpha_{k-1}^{(\nu)}},q^{-k+1}\big[$, for large values of~$k$.

Thus, for large values of $k$, by Theorem~\ref{TheoremA},
\begin{gather}\label{Ineq2}
J_{\nu}^{\prime}\big(j_{k\nu};q^2\big)J_{\nu}^{\prime}\big(j_{k-1,\nu};q^2\big)<0 .
\end{gather}
Using, now, (\ref{Ineq1}) and (\ref{Ineq2}) we may write, for large values of $k$,
\begin{gather*}
J_{\nu}\big(qj_{k\nu};q^2\big)J_{\nu}^{\prime}\big(j_{k-1,\nu};q^2\big)>0 .
\end{gather*}
This guarantees that, for large values of $k$,
\begin{gather*}
qj_{k,\nu}>j_{k-1,\nu} ,
\end{gather*}
which proves the theorem since, trivially by Theorem~\ref{TheoremA},
$qj_{k,\nu}=q^{1-k+\epsilon_k^{(\nu)}}<q^{1-k}$.
\end{proof}

The following corollaries are immediate consequences of the previous theorem.
\begin{Corollary} For large values of $k$, the sequence $\big\{\epsilon_{k}^{(\nu)}\big\}_{k}$ that appears in Theorem~{\rm \ref{TheoremA}} is strictly decreasing, i.e., there exists a positive integer $k_0$ such that $\epsilon_{k+1}^{(\nu)}<\epsilon_{k}^{(\nu)}$ whenever $k\geq k_0$.
\end{Corollary}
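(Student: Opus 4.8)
The plan is to read the monotonicity straight off the localization of the shifted zero supplied by Theorem~\ref{shiftedzero}, converting the inequality between the geometric quantities into an inequality between the exponents $\epsilon_k^{(\nu)}$ by means of Theorem~\ref{TheoremA}. No new analysis is required: the corollary is essentially a change of variables in an inequality already proved.

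First I would record the governing inclusion. By Theorem~\ref{shiftedzero}, for all sufficiently large $k$ one has $qj_{k\nu}>j_{k-1,\nu}$. Next I would rewrite both sides as powers of $q$ using Theorem~\ref{TheoremA}: from $j_{k\nu}=q^{-k+\epsilon_k^{(\nu)}}$ it follows that $qj_{k\nu}=q^{-(k-1)+\epsilon_k^{(\nu)}}$, while $j_{k-1,\nu}=q^{-(k-1)+\epsilon_{k-1}^{(\nu)}}$. Thus the single inequality $j_{k-1,\nu}<qj_{k\nu}$ becomes $q^{-(k-1)+\epsilon_{k-1}^{(\nu)}}<q^{-(k-1)+\epsilon_k^{(\nu)}}$, in which the two exponents differ only in their $\epsilon$-terms.

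The decisive step is then to compare these exponents. Because $0<q<1$, the map $t\mapsto q^t$ is strictly decreasing, so the displayed inequality is equivalent to $-(k-1)+\epsilon_{k-1}^{(\nu)}>-(k-1)+\epsilon_k^{(\nu)}$, that is, to $\epsilon_k^{(\nu)}<\epsilon_{k-1}^{(\nu)}$. Replacing $k$ by $k+1$ gives $\epsilon_{k+1}^{(\nu)}<\epsilon_k^{(\nu)}$ for every $k$ beyond some threshold $k_0$, which is exactly the asserted strict decrease. The threshold $k_0$ is simply inherited from the ``large $k$'' hypotheses of Theorems~\ref{TheoremA} and~\ref{shiftedzero}.

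I do not anticipate a genuine obstacle, since all the substantive work has already been absorbed into Theorem~\ref{shiftedzero}, whose proof combined the sign analysis of Theorem~\ref{signderivative-J3-2} and Corollary~\ref{monotonyJ3-2} with the orthogonality-driven inequality~(\ref{Ineq1}). The only point demanding care is the \emph{orientation} of the comparison: because $q<1$, the exponential $t\mapsto q^t$ reverses order, so a larger value of $qj_{k\nu}$ corresponds to a \emph{smaller} exponent; overlooking this sign convention would flip the monotonicity and yield a false ``increasing'' conclusion.
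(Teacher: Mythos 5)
Your proposal is correct and follows exactly the paper's own argument: read $qj_{k\nu}>j_{k-1,\nu}$ from Theorem~\ref{shiftedzero}, rewrite both sides as $q^{1-k+\epsilon_k^{(\nu)}}$ and $q^{1-k+\epsilon_{k-1}^{(\nu)}}$ via Theorem~\ref{TheoremA}, and invert the order because $t\mapsto q^t$ is decreasing for $0<q<1$. Your explicit caution about the orientation reversal is exactly the one delicate point, and you handle it correctly.
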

\begin{proof} The previous theorem guarantees that, for large values of $k$, $qj_{k\nu}>j_{k-1,\nu}$, which is equivalent to $q^{1-k+\epsilon_k^{(\nu)}}>q^{1-k+\epsilon_{k-1}^{(\nu)}}$, hence $\epsilon_k^{(\nu)}<\epsilon_{k-1}^{(\nu)}$ for large values of $k$.
\end{proof}

\begin{Corollary} For large values of $k$,
\begin{gather*}\frac{j_{k\nu}}{q}\in {}\big] q^{-k-1+\alpha_{k+1}^{(\nu)}} , j_{k+1,\nu} \big[ .\end{gather*}
\end{Corollary}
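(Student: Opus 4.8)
The plan is to recast the claimed two-sided inclusion as a pair of inequalities between the exponents of Theorem~\ref{TheoremA}. Writing $j_{k\nu}=q^{-k+\epsilon_k^{(\nu)}}$ and $j_{k+1,\nu}=q^{-(k+1)+\epsilon_{k+1}^{(\nu)}}$, one has $j_{k\nu}/q=q^{-k-1+\epsilon_k^{(\nu)}}$; since $0<q<1$ and $q^{x}$ is decreasing, the assertion $j_{k\nu}/q\in{}\big]q^{-k-1+\alpha_{k+1}^{(\nu)}},j_{k+1,\nu}\big[$ is equivalent to the single chain $\epsilon_{k+1}^{(\nu)}<\epsilon_k^{(\nu)}<\alpha_{k+1}^{(\nu)}$. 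Thus the corollary splits into an upper bound $\epsilon_{k+1}^{(\nu)}<\epsilon_k^{(\nu)}$ (which yields $j_{k\nu}/q<j_{k+1,\nu}$) and a lower bound $\epsilon_k^{(\nu)}<\alpha_{k+1}^{(\nu)}$ (which yields $j_{k\nu}/q>q^{-k-1+\alpha_{k+1}^{(\nu)}}$).

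The upper bound is immediate. Applying Theorem~\ref{shiftedzero} with $k$ replaced by $k+1$ gives $qj_{k+1,\nu}>j_{k\nu}$ for large $k$, that is $j_{k+1,\nu}>j_{k\nu}/q$; equivalently it is exactly the monotone decrease $\epsilon_{k+1}^{(\nu)}<\epsilon_k^{(\nu)}$ established in the preceding corollary. No further work is required here.

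The lower bound $\epsilon_k^{(\nu)}<\alpha_{k+1}^{(\nu)}$ is the main obstacle, and I expect it to require genuinely more than Theorem~\ref{shiftedzero}. Theorem~\ref{TheoremA} only supplies $\epsilon_k^{(\nu)}<\alpha_k^{(\nu)}$, and because $\alpha_k^{(\nu)}>\alpha_{k+1}^{(\nu)}$ --- both being $\Theta\big(q^{2k}\big)$ by \eqref{asymptoticbehavior}, differing by a factor $q^2$ --- this crude bound is too weak to conclude $\epsilon_k^{(\nu)}<\alpha_{k+1}^{(\nu)}$; indeed a hypothetical $\epsilon_k^{(\nu)}$ merely comparable to $\alpha_k^{(\nu)}$ would violate the claim for small $q$. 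What saves the statement is that $\epsilon_k^{(\nu)}$ is in reality far smaller than $\alpha_{k+1}^{(\nu)}$. I would extract this by localizing the zero $j_{k\nu}$ through Theorem~\ref{TheoremB}: setting $z=q^{-k+\epsilon_k^{(\nu)}}$ one has $K\big(q^2z^2\big)=k-1$ and $\sin\big(\beta\big(q^2z^2\big)\big)=(-1)^{k+1}\sin\big(\pi\epsilon_k^{(\nu)}\big)$, so the vanishing of the bracket in Theorem~\ref{TheoremB} balances the ``increasing'' term $A\big(q^2z^2\big)\sin\big(\pi\epsilon_k^{(\nu)}\big)$, of order $q^{-k(k-1)}\sin\big(\pi\epsilon_k^{(\nu)}\big)$, against the ``decreasing'' term of order $q^{k(k-1)}\omega^{k}=q^{k(k-1)+2k(\nu+1)}$, forcing $\sin\big(\pi\epsilon_k^{(\nu)}\big)$, and hence $\epsilon_k^{(\nu)}$, to be of order $q^{2k(k+\nu)}$. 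Since the ratio $q^{2k(k+\nu)}/\alpha_{k+1}^{(\nu)}$ is of order $q^{\,2k(k+\nu)-2(k+1+\nu)}$, whose exponent tends to $+\infty$, this ratio tends to $0$; so the super-exponential smallness of $\epsilon_k^{(\nu)}$ dominates the merely exponential $\alpha_{k+1}^{(\nu)}$ and yields $\epsilon_k^{(\nu)}<\alpha_{k+1}^{(\nu)}$ for all large $k$.

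A more self-contained alternative, staying closer to the sign technology of Theorem~\ref{shiftedzero}, is to show directly that $j_{k\nu}/q$ lands in the constant-sign window $\big]q^{-k-1+\alpha_{k+1}^{(\nu)}},q^{-k-1}\big[$ furnished by Corollary~\ref{monotonyJ3-2} with $m=k+1$. Since the upper bound already gives $j_{k\nu}/q<j_{k+1,\nu}<q^{-k-1}$, it would remain only to exclude that $j_{k\nu}/q$ falls in the gap $\big]q^{-k},q^{-k-1+\alpha_{k+1}^{(\nu)}}\big[$ lying to the left of that window; a $q$-difference relation linking $J_\nu\big(j_{k\nu}/q;q^2\big)$ to $J_\nu\big(qj_{k\nu};q^2\big)$, in the spirit of the Proposition~5 of \cite{JLC3} used for Theorem~\ref{shiftedzero}, should fix the sign of $J_\nu$ at $j_{k\nu}/q$ and thereby its position. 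Either way, the hard point is the same: controlling how deeply inside its window the shifted zero $j_{k\nu}/q$ sits, i.e.\ bounding $\epsilon_k^{(\nu)}$ from above by $\alpha_{k+1}^{(\nu)}$ rather than by the weaker $\alpha_k^{(\nu)}$.
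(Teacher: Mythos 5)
Your reduction of the inclusion to the exponent chain $\epsilon_{k+1}^{(\nu)}<\epsilon_k^{(\nu)}<\alpha_{k+1}^{(\nu)}$ is the right one, and your treatment of the upper bound ($j_{k\nu}/q<j_{k+1,\nu}$ via Theorem~\ref{shiftedzero} applied at $k+1$, i.e., via the monotone decrease of $\epsilon_k^{(\nu)}$ from the preceding corollary) coincides exactly with the paper's. Where you diverge is the lower bound, and there you have detected a genuine inconsistency in the source: the paper's own proof disposes of that half in one line, concluding ``by Theorem~\ref{TheoremA}, $j_{k\nu}/q>q^{-k-1+\alpha_k^{(\nu)}}$'' --- with index $k$, not $k+1$. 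That deduction is just $\epsilon_k^{(\nu)}<\alpha_k^{(\nu)}$ and is of course valid, but it places $j_{k\nu}/q$ in the larger interval $\big]q^{-k-1+\alpha_k^{(\nu)}},j_{k+1,\nu}\big[$ rather than in the one displayed in the statement; since $\alpha_{k+1}^{(\nu)}<\alpha_k^{(\nu)}$ for large $k$, the printed statement is strictly stronger and, as you correctly observe, does not follow from Theorem~\ref{TheoremA} alone. So either the statement carries a typo and should read $\alpha_k^{(\nu)}$ --- in which case your first two paragraphs already constitute the entire intended proof and the rest of your work solves a harder problem than was asked --- or the statement is meant as printed and the paper's proof has exactly the gap you describe.

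If the $\alpha_{k+1}^{(\nu)}$ version is the intended one, your plan for closing the gap is sound and is nowhere in the paper: forcing the bracket of Theorem~\ref{TheoremB} to vanish at the zero (with $q$ shifted to $q^2$, $\omega=q^{2(\nu+1)}$, $K=k-1$ and $\sin(\beta)=(-1)^{k+1}\sin\big(\pi\epsilon_k^{(\nu)}\big)$) balances a term of size $q^{-k(k-1)}\sin\big(\pi\epsilon_k^{(\nu)}\big)$ against one of size $q^{k(k-1)}\omega^{k}$, giving $\epsilon_k^{(\nu)}=O\big(q^{2k(k+\nu)}\big)$, which is indeed $o\big(\alpha_{k+1}^{(\nu)}\big)$ since $\alpha_{k+1}^{(\nu)}$ is only of order $q^{2(k+1+\nu)}$. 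To make this airtight you would still need to record that $B(\omega,\cdot)$ and the theta-type factor appearing in $A(\cdot)$ are bounded away from zero for large $k$, but those checks are routine; the orders of magnitude in your sketch are correct.
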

\begin{proof}
According to Theorem~\ref{TheoremA} we have both $j_{k\nu}= q^{-k+\epsilon_k^{(\nu)}}$ and $j_{k+1,\nu} = q^{-k-1+\epsilon_{k+1}^{(\nu)}}$. Therefore, $\frac{j_{k\nu}}{q}= q^{-k-1+\epsilon_{k}^{(\nu)}}$. However, since by the previous corollary, there exists a positive integer $k_0$ such that $\epsilon_{k+1}^{(\nu)}<\epsilon_{k}^{(\nu)}$ whenever $k\geq k_0$, then $q^{\epsilon_k^{(\nu)}}<q^{\epsilon_{k+1}^{(\nu)}}$ whenever $k\geq k_0$, hence it follows that $\frac{j_{k\nu}}{q}<j_{k+1,\nu}$ and, by Theorem~\ref{TheoremA}, $\frac{j_{k\nu}}{q}>q^{-k-1+\alpha_k^{(\nu)}}$, both for large values of $k$.
\end{proof}

We now prove the following theorem.
\begin{Theorem}\label{AsymptoticJ3-at-shifted-zeros} For large values of $k$,
\begin{gather*}\big|J_{\nu}\big(qj_{k\nu};q^2\big)\big|\leq
\frac{\big({-}q^2,-q^{2(\nu+1)};q^2\big)_{\infty}}{\big(q^2;q^2\big)_{\infty}} q^{(k+\nu)(k-1)} .\end{gather*}
\end{Theorem}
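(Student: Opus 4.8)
The plan is to feed the Hahn--Exton function, in the form \eqref{def2}, into the asymptotic representation of Theorem~\ref{TheoremB}, and then to evaluate at the shifted zero $z=qj_{k\nu}=q^{1-k+\epsilon_k^{(\nu)}}$. Writing $\omega=q^{2(\nu+1)}$ and shifting $q$ to $q^2$ throughout \eqref{A.1}, \eqref{A.2} and Theorem~\ref{TheoremB}, one gets $J_\nu(z;q^2)=\frac{z^\nu B(\omega,q^2z^2)}{(q^2;q^2)_\infty}\big(A(q^2z^2)\sin\beta(q^2z^2)+(-1)^{K+1}q^{(K+1)K}\omega^{K+1}\frac{(q^{2K+4}z^2;q^2)_\infty}{(q^2;q^2)_\infty}C(\omega,q^2z^2)\big)$, with $K=K(q^2z^2)$, the factor $(\omega;q^2)_\infty$ having cancelled against the prefactor. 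At $z=qj_{k\nu}$ a direct computation gives $\beta(q^2z^2)=(2-k+\epsilon_k^{(\nu)})\pi$, hence $\sin\beta(q^2z^2)=(-1)^k\sin(\pi\epsilon_k^{(\nu)})$, and $K(q^2z^2)=\big[k-\tfrac32-\epsilon_k^{(\nu)}\big]=k-2$ for large $k$ (since $0<\epsilon_k^{(\nu)}<\alpha_k^{(\nu)}\to0$ by \eqref{asymptoticbehavior}). This splits $J_\nu(qj_{k\nu};q^2)$ into an \emph{oscillatory} piece (the $A\sin\beta$ term) and a \emph{surviving} piece (the second term).

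First I would analyse the surviving piece. With $K=k-2$ the $q$-shifted factorial argument collapses to $q^{2K+4}z^2=q^{2+2\epsilon_k^{(\nu)}}$, and $(qj_{k\nu})^\nu q^{(K+1)K}\omega^{K+1}=q^{\nu(1-k+\epsilon_k^{(\nu)})}q^{(k-1)(k-2)}q^{2(\nu+1)(k-1)}=q^{(k-1)(k+\nu)+\nu\epsilon_k^{(\nu)}}$. Thus the surviving piece equals $\frac{B(\omega,q^4j_{k\nu}^2)}{(q^2;q^2)_\infty}q^{(k-1)(k+\nu)+\nu\epsilon_k^{(\nu)}}\frac{(q^{2+2\epsilon_k^{(\nu)}};q^2)_\infty}{(q^2;q^2)_\infty}C(\omega,q^4j_{k\nu}^2)$, which is exactly of the announced order $q^{(k-1)(k+\nu)}$, with a coefficient that tends to $1/(q^2;q^2)_\infty$ as $k\to\infty$ because $B,C\to1$, $\epsilon_k^{(\nu)}\to0$ and $(q^{2+2\epsilon_k^{(\nu)}};q^2)_\infty\to(q^2;q^2)_\infty$.

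The delicate point, and where I expect the main obstacle, is to show that the oscillatory piece does not spoil this estimate. The crude bound $|\sin(\pi\epsilon_k^{(\nu)})|\le\pi\epsilon_k^{(\nu)}=O(q^{2k})$ from Theorem~\ref{TheoremA} is hopeless here: multiplied by $A(q^4j_{k\nu}^2)=O\big(q^{-(k-1)(k-2)}\big)$ it would only yield $q^{-k^2+O(k)}$, which dwarfs the surviving piece, so $\epsilon_k^{(\nu)}$ must be controlled far more precisely. The device I would use is the \emph{exact} vanishing ${}_1\phi_1(0;\omega;q^2,q^2j_{k\nu}^2)=0$, i.e.\ Theorem~\ref{TheoremB} read at $z=j_{k\nu}$, where $K(q^2j_{k\nu}^2)=k-1$; this identity rewrites $A(q^2j_{k\nu}^2)\sin(\pi\epsilon_k^{(\nu)})$ as $\pm\,q^{k(k-1)+2(\nu+1)k}\frac{(q^{2+2\epsilon_k^{(\nu)}};q^2)_\infty}{(q^2;q^2)_\infty}C(\omega,q^2j_{k\nu}^2)$, so $\epsilon_k^{(\nu)}$ never has to be estimated explicitly. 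Combining this with the elementary ratio $A(q^2w)/A(w)=1/w$ — valid because passing from $w$ to $q^2w$ shifts $\beta$ by $\pi$ and hence leaves the theta factor $|(\tilde q e^{2i\beta};\tilde q)_\infty|^2$ invariant — gives $A(q^4j_{k\nu}^2)\sin(\pi\epsilon_k^{(\nu)})=q^{2k-2-2\epsilon_k^{(\nu)}}A(q^2j_{k\nu}^2)\sin(\pi\epsilon_k^{(\nu)})$, so the oscillatory piece is smaller than the surviving piece by a factor of order $q^{4k+2\nu-2}\to0$.

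Finally I would assemble the two pieces and pass to the stated inequality. Both carry the common factor $q^{(k-1)(k+\nu)}\frac{(q^{2+2\epsilon_k^{(\nu)}};q^2)_\infty}{(q^2;q^2)_\infty}$, and the residual coefficients ($B$, $C$, and the bracket $1+O(q^{4k})$) tend to $1/(q^2;q^2)_\infty$, so the true size is $\tfrac{1+o(1)}{(q^2;q^2)_\infty}q^{(k-1)(k+\nu)}$. Since the stated constant $\frac{(-q^2,-q^{2(\nu+1)};q^2)_\infty}{(q^2;q^2)_\infty}$ strictly exceeds this limiting value (both $(-q^2;q^2)_\infty$ and $(-q^{2(\nu+1)};q^2)_\infty$ are $>1$), for all large $k$ the asymptotic value is majorised by it. The only genuinely technical bookkeeping that remains is to bound $B(\omega,\cdot)$, $C(\omega,\cdot)$ and the ratio $\frac{(q^{2+2\epsilon_k^{(\nu)}};q^2)_\infty}{(q^2;q^2)_\infty}$ uniformly in $k$, so that the clean explicit constant is reached rather than merely an $O$-bound.
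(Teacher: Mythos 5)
Your argument is correct, but it follows a genuinely different route from the paper. The paper never expands $J_\nu$ at $qj_{k\nu}$ at all: it combines the exact vanishing $J_\nu(j_{k-1,\nu};q^2)=0$, the bound $\big|J_\nu\big(q^{-k+1};q^2\big)\big|\leq \frac{(-q^2,-q^{2(\nu+1)};q^2)_\infty}{(q^2;q^2)_\infty}q^{(k+\nu)(k-1)}$ quoted from \cite{BBEB} (originally \cite{KooS}), the localisation $qj_{k\nu}\in\,]j_{k-1,\nu},q^{-k+1}[$ of Theorem~\ref{shiftedzero}, and the strict monotonicity of $J_\nu(\cdot;q^2)$ on that interval from Corollary~\ref{monotonyJ3-1}; a monotone function vanishing at the left endpoint is dominated in modulus on the whole interval by its value at the right endpoint, and the bound transfers to the interior point $qj_{k\nu}$. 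Your route instead attacks Theorem~\ref{TheoremB} head-on at $z=qj_{k\nu}$, and the two devices you introduce to tame the a priori huge term $A\sin(\pi\epsilon_k^{(\nu)})$ --- reading the expansion at the exact zero $j_{k\nu}$ to solve for $A(q^2j_{k\nu}^2)\sin(\pi\epsilon_k^{(\nu)})$, and the functional relation $A(q^2w)=A(w)/w$ (valid since $\beta\mapsto\beta+\pi$ leaves the theta factor invariant) --- are sound; I checked the exponent bookkeeping ($K=k-2$, the collapse to $q^{(k-1)(k+\nu)+\nu\epsilon_k^{(\nu)}}$, and the gain of $q^{4k+2\nu-2}$ on the oscillatory piece) and it is right. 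What your approach buys is strictly more information: you obtain the asymptotic \emph{equality} $|J_\nu(qj_{k\nu};q^2)|=\frac{1+o(1)}{(q^2;q^2)_\infty}q^{(k+\nu)(k-1)}$, which shows the constant in the statement is not sharp; what it costs is that it is heavier, and it silently needs $B(\omega,q^2j_{k\nu}^2)\neq 0$ to extract the bracket-vanishing from ${_1}\phi_1=0$ (harmless, since $B\to1$). Your closing worry about uniform control of $B$, $C$ and the Pochhammer ratio is unnecessary: since $\omega$ is fixed and all arguments tend to infinity with $k$, the plain limits already convert your asymptotic identity into the stated inequality for large $k$, because $(-q^2;q^2)_\infty(-q^{2(\nu+1)};q^2)_\infty>1$.
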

\begin{proof}On one hand, being $j_{k\nu}$, for $k=1,2,3,\ldots$, the positive zeros of the Hahn--Exton $q$-Bessel function, we have
\begin{gather}\label{1}
J_{\nu}\big(j_{k-1,\nu};q^2\big)=0, \qquad k=2,3,4,\ldots .
\end{gather}
On the other hand, by \cite[equation~(12), p.~1205]{BBEB},
\begin{gather}\label{2}
\big|J_{\nu}\big(q^{-k+1};q^2\big)\big|\leq \frac{\big({-}q^2,-q^{2(\nu+1)};q^2\big)_{\infty}}{\big(q^2;q^2\big)_{\infty}} q^{(k+\nu)(k-1)} .
\end{gather}
This last result was first presented in \cite{KooS} and it can also be obtained in an equivalent form using directly Theorem~\ref{TheoremB}.

 Notice that, by Theorem~\ref{TheoremA}, $j_{k-1,\nu}=q^{-k+1+\epsilon_{k-1}^{(\nu)}}$ where, as a~consequence of~(\ref{asymptoticbehavior}), $\lim\limits_{k\to\infty}(k-1)\epsilon_{k-1}^{(\nu)}=0$. Thus, by Corollary~\ref{monotonyJ3-1}, $J_{\nu}\big(z;q^2\big)$ is strictly monotone in each interval $]j_{k-1,\nu} , q^{-k+1}[$, for large values of~$k$. Now, since by Theorem~\ref{shiftedzero}, $qj_{k,\nu}\in {}]j_{k-1,\nu} , q^{-k+1}[$, then, using~(\ref{1}) and~(\ref{2}), the theorem follows.
\end{proof}

\subsection[Corresponding properties for the function ${_1}\phi_1(0;\omega;q,z)$]{Corresponding properties for the function $\boldsymbol{{_1}\phi_1(0;\omega;q,z)}$}

\begin{Theorem}Let $\omega$ be fixed in $[0,1[$ and $\{\tau_m\}_m$ be a sequence such that $0\leq\tau_m<1$ for $m=1,2,3,\ldots$:
\begin{enumerate}\itemsep=0pt
\item[$(i)$] if $\lim\limits_{m\to\infty}m\tau_m=0$ then $\operatorname{sgn}\Big(\frac{\partial {_1}\phi_1(0;\omega;q,z)}{\partial
z}_{\big|z=q^{-m+\tau_m}}\Big)=(-1)^{m+1}$;

\item[$(ii)$] if $\lim\limits_{m\to\infty}m\tau_m=\infty$ then $\operatorname{sgn}\Big(\frac{\partial {_1}\phi_1(0;\omega;q,z)}{\partial z}_{\big|z=q^{-m+\tau_m}}\Big)=(-1)^{m}$, being both signs valid for large values of~$m$.
\end{enumerate}
\end{Theorem}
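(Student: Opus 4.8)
The plan is to apply Theorem~\ref{TheoremC} directly. Unlike the proof of Theorem~\ref{signderivative-J3-2}, here there is neither a product $z^\nu\,{_1}\phi_1$ to differentiate nor a rescaling $q\mapsto q^2$, $z\mapsto q^2z^2$ to perform, so the computation is cleaner; the single shift in the final exponents (namely $(-1)^{m+1}$, $(-1)^m$ here versus $(-1)^m$, $(-1)^{m-1}$ there) will arise solely from the value of $\beta$ at the unscaled point.

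First I would put $z=q^{-m+\tau_m}$. With the notation~\eqref{A.1} one has $\beta\big(q^{-m+\tau_m}\big)=(-m+\tau_m)\pi$, so that $\sin(\beta(z))=(-1)^m\sin(\pi\tau_m)$, $\cos(\beta(z))=(-1)^m\cos(\pi\tau_m)$ and $-\tfrac{\beta(z)}{\pi}+\tfrac12=m-\tau_m+\tfrac12$. Substituting into Theorem~\ref{TheoremC} and factoring out $(-1)^m$, the derivative takes the form
\begin{gather*}
\frac{\partial {_1}\phi_1(0;\omega;q,z)}{\partial z}_{\big|z=q^{-m+\tau_m}}
=\frac{(-1)^m A\big(q^{-m+\tau_m}\big)}{(\omega;q)_\infty\,q^{-m+\tau_m}}
\Bigg(\big(m-\tau_m+\tfrac12\big)\sin(\pi\tau_m)+\frac{\pi}{\ln q}\cos(\pi\tau_m)\\
\qquad{}+\frac{8\pi}{\ln q}\sum_{k=1}^{\infty}\frac{\tilde q^{\,k}}{\big|1-\tilde q^{\,k}e^{-2i\pi\tau_m}\big|^2}
\sin^2(\pi\tau_m)\cos(\pi\tau_m)+O\Big(\tfrac{\ln(q^{-m+\tau_m})}{q^{-m+\tau_m}}\Big)\Bigg).
\end{gather*}
Since $\omega\in[0,1[$ gives $(\omega;q)_\infty>0$, and $A(z)>0$ for every $z>0$ directly from~\eqref{A.2}, the sign of the derivative equals $(-1)^m$ times the sign of the bracketed expression.

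The sign analysis then mirrors that of Theorem~\ref{signderivative-J3-2}. As $m\to\infty$ one has $z=q^{-m+\tau_m}\to+\infty$, so the remainder is $O\big(m\,q^{m}\big)$ and decays exponentially. In case~(i), $m\tau_m\to0$ forces $\tau_m\to0$, hence $\big(m-\tau_m+\tfrac12\big)\sin(\pi\tau_m)\sim\pi\big(m+\tfrac12\big)\tau_m\to0$ and the series term is $O\big(\tau_m^2\big)\to0$, leaving the dominant contribution $\tfrac{\pi}{\ln q}\cos(\pi\tau_m)\to\tfrac{\pi}{\ln q}<0$; thus the bracket is eventually negative and $\operatorname{sgn}=(-1)^m(-1)=(-1)^{m+1}$, proving~(i). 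In case~(ii), $m\tau_m\to\infty$ makes the unbounded term $\big(m-\tau_m+\tfrac12\big)\sin(\pi\tau_m)$ dominate both the bounded cosine term and the decaying remainder; as $0<\tau_m<1$ eventually yields $\sin(\pi\tau_m)>0$, the bracket is eventually positive and $\operatorname{sgn}=(-1)^m$, proving~(ii).

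I expect the \emph{main obstacle} to be the justification that the unbounded term genuinely dominates in case~(ii): one must exclude the degenerate possibility $\tau_m\to1$ with $m(1-\tau_m)$ bounded, in which $\sin(\pi\tau_m)\sim\pi(1-\tau_m)$ could decay quickly enough to rival the $O(1)$ cosine term even though $m\tau_m\to\infty$. Handling this -- or, following the convention already adopted for Theorem~\ref{signderivative-J3-2}, confining attention to the generic regime and invoking the same dominant-term reasoning -- is the only delicate point; everything else is the substitution and bookkeeping recorded above.
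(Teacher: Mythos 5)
Your proposal is correct and follows essentially the same route as the paper: substitute $z=q^{-m+\tau_m}$ into Theorem~\ref{TheoremC}, pull the positive prefactor $A\big(q^{-m+\tau_m}\big)/\big((\omega;q)_\infty q^{-m+\tau_m}\big)$ (which the paper writes as $C_q(\omega)\,q^{-\frac{m(m-1)}{2}+(m-1)\tau_m+\frac{(\tau_m+1)\tau_m}{2}}$ with $C_q(\omega)>0$) out together with $(-1)^m$, and identify the dominant term of the bracket in each regime. The delicate point you flag in case~(ii) is equally present in the paper's proof, which simply asserts that $(-1)^m\big(m-\tau_m+\tfrac12\big)\sin(\pi\tau_m)$ is dominant; it is in fact harmless, since for $\tau_m\ge\tfrac12$ the term $\tfrac{\pi}{\ln q}\cos(\pi\tau_m)$ and the series term are both nonnegative, so the only regime in which they could compete with opposite sign is $\tau_m<\tfrac12$, where $\big(m-\tau_m+\tfrac12\big)\sin(\pi\tau_m)\ge 2m\tau_m+O(1)\to\infty$.
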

\begin{proof}
Considering $z=q^{-m+\tau_m}$ in (\ref{A.1}) and (\ref{A.2}) one obtains, respectively,
\begin{gather*}
 \beta\big(q^{-m+\tau_m}\big)=(-m+\tau_m)\pi
\end{gather*}
and
\begin{gather*}
A\big(q^{-m+\tau_m}\big)=2 q^{-\frac{(m+1)m}{2}+m\tau_m+\frac{\tau_m(1-\tau_m)}{2}+\frac{\pi^2}{3\ln{(q)}}-\frac{1}{12}}
\big|\big(e^{2i\pi\tau_m}e^{\frac{4\pi^2}{\ln{(q)}}};e^{\frac{4\pi^2}{\ln{(q)}}}\big)_{\infty}\big|^2.
\end{gather*}
Then, Theorem~\ref{TheoremC} enables one to write
\begin{gather*}
\frac{\partial {_1}\phi_1(0;\omega;q,z)}{\partial z}_{\big|z=q^{-m+\tau_m}} =C_q(\omega)q^{-\frac{m(m-1)}{2}+(m-1)\tau_m+\frac{(\tau_m+1)\tau_m}{2}}\\
\qquad{} \times\Bigg\{\left(m-\tau_m+\frac{1}{2}\right)\sin{(\pi(-m+\tau_m))}+\frac{\pi}{\ln{(q)}}\cos{(\pi(-m+\tau_m))} \\
\qquad{} +\frac{8\pi}{\ln{(q)}} \sum_{k=1}^{\infty}
\frac{\tilde{q}^k}{\big|1-\tilde{q}^ke^{-2i (\pi(-m+\tau_m) )}\big|^2}\sin^2{ (\pi(-m+\tau_m) )}\cos{ (\pi(-m+\tau_m) )} \\
\qquad{} +O\left(\frac{\ln{(q^{-m+\tau_m})}}{q^{-m+\tau_m}}\right)\Bigg\}
\end{gather*}
as $m\to+\infty$, or equivalently,
\begin{gather}
 \frac{\partial {_1}\phi_1(0;\omega;q,z)}{\partial z}_{\big|z=q^{-m+\tau_m}} =
C_q(\omega)q^{-\frac{m(m-1)}{2}+(m-1)\tau_m+\frac{(\tau_m+1)\tau_m}{2}}(-1)^m \nonumber\\
\hphantom{\frac{\partial {_1}\phi_1(0;\omega;q,z)}{\partial z}_{\big|z=q^{-m+\tau_m}} =}{} \times\Bigg\{\left(m-\tau_m+\frac{1}{2}\right) \sin{(\pi\tau_m)}+ \frac{\pi}{\ln{(q)}}\cos{(\pi\tau_m)} \nonumber\\
 \hphantom{\frac{\partial {_1}\phi_1(0;\omega;q,z)}{\partial z}_{\big|z=q^{-m+\tau_m}} =}{}
+\frac{8\pi}{\ln{(q)}}\sum_{k=1}^{\infty}\frac{\tilde{q}^k}{\big|1-\tilde{q}^ke^{-2i\pi\tau_m}\big|^2}
\sin^2{(\pi\tau_m)}\cos{(\pi\tau_m)} \nonumber\\
 \hphantom{\frac{\partial {_1}\phi_1(0;\omega;q,z)}{\partial z}_{\big|z=q^{-m+\tau_m}} =}{} +O\left(\frac{\ln{\big(q^{-m+\tau_m}\big)}}{q^{-m+\tau_m}}\right)\Bigg\}\label{B1}
\end{gather}
as $m\to+\infty$, where $C_q(\omega)>0$.

On one hand we have that, if $\lim\limits_{m\to\infty}m\tau_m=0$ then, as $m\to\infty$, the dominant term of the sign of (\ref{B1}) is $(-1)^m\frac{\pi}{\ln{(q)}}\cos{(\pi\tau_m)}$, with $\frac{\pi}{\ln{(q)}}$ negative. This proves part (i) of the theorem.

On the other hand, if $\lim\limits_{m\to\infty}m\tau_m=\infty$ then, as $m\to\infty$, the dominant term turns to be $(-1)^m\big(m-\tau_m+\frac{1}{2}\big)\sin{(\pi\tau_m)}$. This proves part (ii) of the theorem.
\end{proof}

We notice that if $\{\tau_m\}_m$ is any sequence satisfying the condition (i) of the previous theorem then, by (\ref{B1}), the same conclusion of part~(i) remains true for any other sequence~$\{\gamma_m\}_m$ such that $0\leq \gamma_m\leq\tau_m$. This implies the next result.
\begin{Corollary}
Let $\{\tau_m\}_m$ be a sequence such that $\lim\limits_{m\to\infty}m\tau_m=0$. Then, for large values of~$m$, the sign of
\begin{gather*}\frac{\partial
{_1}\phi_1(0;\omega;q,z)}{\partial z}\end{gather*} remains constant in each interval $]q^{-m+\tau_m},q^{-m}[$.
\end{Corollary}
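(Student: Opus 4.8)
The plan is to reduce the statement to the sign computation already carried out in formula~(\ref{B1}), the only genuinely new ingredient being the \emph{uniformity} of the estimates across the whole interval. First I would parametrize the interval: since $0<q<1$ the map $\gamma\mapsto q^{-m+\gamma}$ is strictly decreasing, so as $\gamma$ runs over $]0,\tau_m[$ the value $q^{-m+\gamma}$ runs over $]q^{-m+\tau_m},q^{-m}[$. Hence every $z$ in that interval can be written uniquely as $z=q^{-m+\gamma}$ with $0<\gamma<\tau_m$, and it suffices to prove that $\operatorname{sgn}\big(\frac{\partial {_1}\phi_1(0;\omega;q,z)}{\partial z}\big)$ at $z=q^{-m+\gamma}$ is the same for all such $\gamma$, for all large~$m$.

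Next I would substitute $\gamma$ in place of $\tau_m$ in~(\ref{B1}): the sign of the derivative equals $(-1)^m$ times the sign of the braces
\begin{gather*}
\left(m-\gamma+\tfrac{1}{2}\right)\sin(\pi\gamma)+\frac{\pi}{\ln(q)}\cos(\pi\gamma)
+\frac{8\pi}{\ln(q)}\sum_{k=1}^{\infty}\frac{\tilde{q}^{\,k}}{\big|1-\tilde{q}^{\,k}e^{-2i\pi\gamma}\big|^{2}}\sin^{2}(\pi\gamma)\cos(\pi\gamma)+O\left(\frac{\ln(q^{-m+\gamma})}{q^{-m+\gamma}}\right),
\end{gather*}
the prefactor $C_q(\omega)q^{-\frac{m(m-1)}{2}+(m-1)\gamma+\frac{(\gamma+1)\gamma}{2}}$ being strictly positive because $C_q(\omega)>0$. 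The crux is that, from $0\le\gamma\le\tau_m$ and $\lim_{m\to\infty}m\tau_m=0$, the squeeze $0\le m\gamma\le m\tau_m\to0$ holds \emph{uniformly} in $\gamma$. Consequently the first term is $(m-\gamma+\tfrac{1}{2})\sin(\pi\gamma)=O(m\gamma)\to0$, the sum term is $O(\gamma^{2})\to0$, and the error term is $O\big(mq^{m-\tau_m}\big)=O(mq^{m})\to0$, all uniformly in $\gamma\in[0,\tau_m]$, while the surviving term $\frac{\pi}{\ln(q)}\cos(\pi\gamma)\to\frac{\pi}{\ln(q)}<0$ (recall $\ln q<0$) uniformly as well.

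Therefore, for $m$ large enough the braces are strictly negative for \emph{every} $\gamma\in[0,\tau_m]$, so the sign of the derivative equals $(-1)^m\cdot(-1)=(-1)^{m+1}$, independently of~$\gamma$; this is exactly the asserted constancy of sign on $]q^{-m+\tau_m},q^{-m}[$, and it recovers en passant the value $(-1)^{m+1}$ from part~(i) of the preceding theorem. The main obstacle is precisely this uniformity: one must ensure that the threshold ``$m$ large'' in~(\ref{B1}) can be chosen independently of the point $\gamma$ in the interval, which is guaranteed by applying the bound $0\le m\gamma\le m\tau_m$ to each of the vanishing terms above rather than to a single fixed sequence.
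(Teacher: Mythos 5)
Your proof is correct and takes essentially the same route as the paper, which likewise derives the corollary from (\ref{B1}) by noting that part (i) of the preceding theorem persists for every sequence $\{\gamma_m\}_m$ with $0\leq\gamma_m\leq\tau_m$, so the sign is $(-1)^{m+1}$ at every point $q^{-m+\gamma}$ of the interval. The only difference is that you make explicit the uniformity in $\gamma$ of the vanishing terms (via the squeeze $0\leq m\gamma\leq m\tau_m\to 0$), which the paper leaves implicit.
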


\section[$q$-analogue of the Riemann--Lebesgue theorem]{$\boldsymbol{q}$-analogue of the Riemann--Lebesgue theorem}\label{Riemann--Lebesgue-theorem}

Following the framework of \cite{CP}, we rewrite the system $\{u_m\}_{m}$ mentioned in the introduction as
\begin{gather*}u_{m}(x)=\frac{x^{\frac{1}{2}}J_{\nu }\big(j_{m\nu }qx;q^{2}\big)}{\big\Vert x^{\frac{1}{2}}J_{\nu }\big(j_{m\nu}qx;q^{2}\big)\big\Vert_{L_q^{2}[0,1]}} ,
\end{gather*}
where, by (\ref{eta}),
 \begin{gather*}
 \eta_{m,\nu} = \big\Vert x^{\frac{1}{2}}J_{\nu}\big(j_{m\nu}qx;q^{2}\big)\big\Vert_{L_q^{2}[0,1]}^2
 = \int_{0}^{1}xJ_{\nu}^2\big(qj_{m\nu}x;q^{2}\big){\rm d}_{q}x \\
\hphantom{\eta_{m,\nu}}{} = \frac{q-1}{2j_{m\nu}}q^{\nu-2}J_{\nu}\big(qj_{m\nu};q^{2}\big)J_{\nu}^{\prime}\big(j_{m\nu};q^{2}\big).
\end{gather*}
The sequence $\{u_m\}_{m}$ defines a system of functions which is orthonormal with respect to the inner product defined in the $L_q^{2}[0,1]$ space by~(\ref{inner-product}) and with the norm $\Vert \cdot\Vert_{L_q^{2}[0,1]}$ induced by it.

In this context, we are able to state the following analogue of the Riemann--Lebesgue theorem, based on a indirect proof within the scope of the inner product spaces: \emph{if $f\in L_q^2[0,1]$ then}
\begin{gather*}\lim_{m\to\infty}\int_{0}^{1}tf(t)J_{\nu}\big(qj_{m\nu}t;q^{2}\big){\rm d}_{q}t=0 .\end{gather*}
This is true since the sequence $\{u_m\}_{m}$ is orthonormal with respect to the inner product space $L_q^{2}[0,1]$, thus the corresponding proof can be carried out like in the classical case \cite[Corollary~36.4, p.~118]{WM}), being a consequence of the Bessel's inequality.

Alternatively, with a direct approach, we can extend the set of functions which satisfy the above property and state the following $q$-analogue of the Riemann--Lebesgue theorem.
\begin{Theorem}\label{Riemann--Lebesgue-2}
If $t^{\frac 1 2}f(t)\in L_q^2[0,1]$ then
\begin{gather*}\lim_{m\to\infty}\int_{0}^{1}tf(t)J_{\nu}\big(qj_{m\nu}t;q^{2}\big){\rm d}_{q}t=0 .\end{gather*}
\end{Theorem}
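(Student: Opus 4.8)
The plan is to turn the desired limit into a statement about the decay of the normalization constants $\eta_{m,\nu}$, by exploiting that $\{u_m\}_m$ is orthonormal in $L_q^2[0,1]$. First I would recognize the target integral as an inner product. Since $u_m(t)=t^{1/2}J_\nu(qj_{m\nu}t;q^2)/\sqrt{\eta_{m,\nu}}$ and, by (\ref{eta}), $\eta_{m,\nu}=\Vert t^{1/2}J_\nu(qj_{m\nu}t;q^2)\Vert_{L_q^2[0,1]}^2$, and since the Bessel kernel is real so $\overline{u_m}=u_m$, one has
\begin{gather*}
\int_0^1 tf(t)J_\nu(qj_{m\nu}t;q^2)\,{\rm d}_qt=\sqrt{\eta_{m,\nu}}\,\langle t^{1/2}f,u_m\rangle .
\end{gather*}
This identity is exactly where the hypothesis $t^{1/2}f\in L_q^2[0,1]$ (rather than $f\in L_q^2[0,1]$) enters, and it is the reason this direct approach enlarges the admissible class of $f$ beyond what the indirect Bessel-inequality argument reaches.

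Next, by the Cauchy--Schwarz inequality in $L_q^2[0,1]$ together with $\Vert u_m\Vert_{L_q^2[0,1]}=1$, I would bound the inner product uniformly in $m$:
\begin{gather*}
|\langle t^{1/2}f,u_m\rangle|\le \Vert t^{1/2}f\Vert_{L_q^2[0,1]}<\infty ,
\end{gather*}
the finiteness being precisely the assumption. Hence the whole integral is dominated by $\sqrt{\eta_{m,\nu}}\,\Vert t^{1/2}f\Vert_{L_q^2[0,1]}$, and it remains only to prove that $\eta_{m,\nu}\to0$ as $m\to\infty$.

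For this I would feed the sharp asymptotics of Section~\ref{asymptotic-properties-J3} into the closed form $\eta_{m,\nu}=\frac{q-1}{2j_{m\nu}}q^{\nu-2}J_\nu(qj_{m\nu};q^2)J_\nu'(j_{m\nu};q^2)$ from (\ref{eta}). Using Theorem~\ref{TheoremA} to write $1/j_{m\nu}=q^{m-\epsilon_m^{(\nu)}}=O(q^m)$, Theorem~\ref{AsymptoticJ3-at-shifted-zeros} for $|J_\nu(qj_{m\nu};q^2)|=O(q^{(m+\nu)(m-1)})$, and Theorem~\ref{asymptotic-derivativeJ3-at-zeros} for $J_\nu'(j_{m\nu};q^2)=O(q^{-m(m+\nu-2)})$, the three exponents combine, the quadratic-in-$m$ parts cancelling:
\begin{gather*}
m+(m+\nu)(m-1)-m(m+\nu-2)=2m-\nu .
\end{gather*}
Thus $\eta_{m,\nu}=O(q^{2m-\nu})\to0$, since $0<q<1$ and $\nu$ is fixed; combined with the uniform bound above this gives $|\int_0^1 tf(t)J_\nu(qj_{m\nu}t;q^2)\,{\rm d}_qt|\le\sqrt{\eta_{m,\nu}}\,\Vert t^{1/2}f\Vert_{L_q^2[0,1]}\to0$, which is the claim.

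I expect the genuine difficulty to reside entirely in the decay $\eta_{m,\nu}=O(q^{2m-\nu})$: the rest is soft Hilbert-space bookkeeping, and the crucial cancellation of the leading quadratic exponents is only available thanks to the delicate asymptotics for $J_\nu(qj_{m\nu};q^2)$ and $J_\nu'(j_{m\nu};q^2)$ established earlier, so the Riemann--Lebesgue statement is essentially a payoff of those estimates. One point to record is that $\eta_{m,\nu}>0$ for large $m$, so that $\sqrt{\eta_{m,\nu}}$ is legitimate; this is immediate because $\eta_{m,\nu}=\Vert t^{1/2}J_\nu(qj_{m\nu}t;q^2)\Vert_{L_q^2[0,1]}^2$ is a squared norm (equivalently, one checks directly that the product of signs in the formula (\ref{eta}) is positive).
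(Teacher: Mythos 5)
Your proposal is correct and follows essentially the same route as the paper: a $q$-Cauchy--Schwarz bound reducing everything to the decay of $\eta_{m,\nu}$, which is then obtained by combining Theorem~\ref{TheoremA} with Theorems~\ref{AsymptoticJ3-at-shifted-zeros} and~\ref{asymptotic-derivativeJ3-at-zeros} in the closed form~(\ref{eta}); your explicit exponent count $m+(m+\nu)(m-1)-m(m+\nu-2)=2m-\nu$ just makes the paper's $\eta_{m,\nu}=O(q^{2m})$ concrete. Phrasing the Cauchy--Schwarz step through the orthonormal system $\{u_m\}$ rather than directly via the $q$-H\"older inequality is only a cosmetic difference.
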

\begin{proof} Starting from the inner product (\ref{inner-product}) and then using the $q$-type H\"older inequality of \cite[Theorem~3.4, p.~346]{CP} with $p=2$, i.e., a $q$-type Cauchy--Schwartz inequality, we may write
\begin{gather}
\left|\int_{0}^{1}tf(t)J_{\nu}\big(qj_{m\nu}t;q^{2}\big){\rm d}_{q}t\right|
 \leq \left(\int_{0}^{1}t|f(t)|^2{\rm d}_{q}t\right)^{\frac 1 2} \left(\int_{0}^{1}tJ_{\nu}^2\big(qj_{m\nu}t;q^{2}\big){\rm d}_{q}t\right)^{\frac
1 2}\nonumber \\
 \hphantom{\left|\int_{0}^{1}tf(t)J_{\nu}\big(qj_{m\nu}t;q^{2}\big){\rm d}_{q}t\right|}{}
 = \left(\int_{0}^{1}t|f(t)|^2{\rm d}_{q}t\right)^{\frac 1 2}\eta_{m\nu}^{\frac 1 2},\label{C-S}
\end{gather}
where, by (\ref{eta}), $\eta_{m\nu}=\frac{q-1}{2j_{m\nu}}q^{\nu-2} J_{\nu}\big(qj_{m\nu};q^{2}\big)J_{\nu}^{\prime}\big(j_{m\nu};q^{2}\big)$.

In the expression for $\eta_{m\nu}$, we already control the asymptotic behavior, as $m\to\infty$, of all its factors.

Thus, joining Theorem~\ref{TheoremA} and~(\ref{asymptoticbehavior}), together with Theorems~\ref{AsymptoticJ3-at-shifted-zeros} and~\ref{asymptotic-derivativeJ3-at-zeros}, we obtain
\begin{gather}\label{asymptotic-eta}
\eta_{m\nu}=O\big(q^{2m}\big) ,\qquad \text{as} \quad m\to\infty .
\end{gather}
Finally, using in (\ref{C-S}) the hypothesis $t^{\frac 1 2}f(t)\in L_q^2[0,1]$ and the asymptotic relation~(\ref{asymptotic-eta}), it follows
\begin{gather*}
\int_{0}^{1}tf(t)J_{\nu}\big(qj_{m\nu}t;q^{2}\big){\rm d}_{q}t=O\big(q^{m}\big) ,\qquad \text{as}\quad m\to\infty ,
\end{gather*}
which proves this version of the Riemann--Lebesgue theorem.
\end{proof}

\begin{Remark} We emphasize that (\ref{asymptotic-eta}) implies that
\begin{gather*}\big\Vert x^{\frac{1}{2}}J_{\nu }\big(j_{m\nu }qx;q^{2}\big)\big\Vert_{L_q^{2}[0,1]}
=\sqrt{\eta_{m\nu}}=O\big(q^{m}\big) ,\qquad \text{as}\quad m\to\infty ,\end{gather*} hence
\begin{gather*}\lim_{m\to\infty} \big\Vert x^{\frac{1}{2}}J_{\nu }\big(j_{m\nu }qx;q^{2}\big)\big\Vert_{L_q^{2}[0,1]}=0 .\end{gather*}
\end{Remark}

However, for the following version of the classical Riemann--Lebesgue theorem: \emph{if $f$ is Riemann or Lebesgue integrable in $[a,b]$ then}
\begin{gather*}
\lim_{\mu\to\infty}\int_{a}^{b}f(t)\sin{(\mu t)}{\rm d}t=0 ,\qquad
\lim_{\mu\to\infty}\int_{a}^{b}f(t)\cos{(\mu t)}{\rm d}t=0 ,
\end{gather*}
we do not expect to prove a similar version for the case of the basic Fourier--Bessel expansions since we do not have, in this context, the nice properties and the formulary that the classical trigonometric functions satisfy, i.e., we do not expect to prove that
\begin{gather*}\lim_{\mu\to\infty}\int_{0}^{1}tf(t)J_{\nu}\big(\mu t;q^{2}\big){\rm d}_{q}t=0, \end{gather*}
when $f\in L_q[0,1]$ or $t^{\frac 1 2}f(t)\in L_q[0,1]$.

Some of the main reasons for that possible failure rely on the fact that, in the proof of the classical Riemann--Lebesgue theorem, involving the classical trigonometric functions, it is used the fact that these functions are bounded as well as some other known properties of the sine and cosine functions.

In this last direction, for the classical Bessel function $J_{\nu}(x)$, since it fails to satisfy the properties of the trigonometric function, an analogue of the Riemann--Lebesgue theorem \cite[p.~589]{Watson} was proved, not for the Bessel function $J_{\nu}(x)$ itself but for the function
\begin{gather*} T_n(t,x)=\sum_{m=1}^{n}\frac{2J_{\nu}\left(j_m x\right)J_{\nu}(j_m t)}{J_{\nu+1}^2(j_m)} ,\end{gather*}
where $j_m$, $m=1,2,3,\dots$, denote the positive zeros of the Bessel function $J_{\nu}(x)$ arranged in ascendent order of magnitude and $0<x\leq 1$, $0\leq t\leq 1$, $\nu\geq -\frac 1 2$:
\textit{if $\int_a^bt^{\frac 1 2}f(t){\rm d}t$ exists and is absolutely convergent then}
\begin{gather*}\lim_{n\to\infty}\int_a^btf(t)T_n(t,x){\rm d}t=0 ,\qquad 0<x\leq 1\quad \text{with} \quad a<b \quad \text{and} \quad a,b\in (0,1) .
\end{gather*} With this regard see also \cite[Remark~4, p.~13]{JLC3}.

\subsection*{Acknowledgements}
The author wants to thank the unknown referees for the valuable comments and remarks that helped to improve the paper. The author is also grateful to Professor Jos\'e Carlos Petronilho from CMUC (University of Coimbra) and Professor Renato \'Alvarez-Nodarse (University of Sevilla) for the valuables discussions. This research was partially supported  by   FCT - Funda\c{c}\~{a}o para a~Ci\^{e}ncia e a Tecnologia, within the project UID-MAT-00013/2013.

\pdfbookmark[1]{References}{ref}
\LastPageEnding

\end{document}